%

\documentclass[reqno,final]{amsart}

\usepackage[natbib=true,hyperref=true,backref=true,maxnames=2,backend=biber]{biblatex}
\makeatletter
\DeclareFieldFormat{postnote}{\blx@mkpageprefix{section}{#1}}
\makeatother
\usepackage[breaklinks,bookmarks,final]{hyperref}

\usepackage{latexsym}
\usepackage{mathrsfs}
\usepackage{amssymb}
\usepackage{euscript}
\usepackage[mathbf]{euler}
\usepackage{mathtools}
\usepackage{mathabx}
\usepackage{cmap}
\usepackage{paresse}
\usepackage[all]{onlyamsmath}
\usepackage{ifthen}
\newcommand{\Ifempty}[3]{\ifthenelse{\equal{#1}{}}{#2}{#3}}

\usepackage[all]{xy}
\usepackage{verbatim}


\newcommand{\DD}{\mathbb{D}}

\newcommand{\Cc}{\mathcal{C}}
\newcommand{\Dd}{\mathcal{D}}

\newcommand{\rA}{\mathrm{A}}

\newcommand{\rM}{\mathrm{M}}

\newcommand{\eps}{\epsilon}

\newcommand{\w}{\omega}

\newcommand{\cat}[1]{\mathcal{#1}} 
\newcommand{\ftr}[1]{\mathscr{#1}} 
\newcommand{\nat}[1]{§ #1} 
\newcommand{\obj}[1]{\mathbf{#1}} 
\renewcommand{\th}[1]{\cat{#1}} 
\newcommand{\md}[1]{\ftr{#1}} 
\newcommand{\dcs}[1]{\md{#1}} 
\newcommand{\sch}[1]{\ftr{#1}} 
\renewcommand{\rM}{\md{M}}

\renewcommand{\Cc}{\cat{C}}
\renewcommand{\Dd}{\cat{D}}
\renewcommand{\rA}{\dcs{A}}
\renewcommand{\DD}{\mathcal{P}}
\newcommand{\oA}{\obj{A}}
\newcommand{\oB}{\obj{B}}
\newcommand{\oC}{\obj{C}}
\newcommand{\oD}{\obj{D}}
\newcommand{\oE}{\obj{E}}
\newcommand{\oU}{\obj{U}}

\newcommand{\oW}{\obj{W}}
\newcommand{\oX}{\obj{X}}
\newcommand{\oY}{\obj{Y}}
\newcommand{\oZ}{\obj{Z}}
\newcommand{\oQ}{\obj{Q}}
\newcommand{\oG}{\obj{G}}
\newcommand{\oH}{\obj{H}}
\newcommand{\fF}{\ftr{F}}
\newcommand{\LL}{\obj{L}}
\newcommand{\tT}{\th{T}}

\newcommand{\ul}[1]{\underline{#1}}

\newcommand{\ti}[1]{\widetilde{#1}}
\newcommand{\ra}[1][]{\xrightarrow{#1}}
\newcommand{\mt}{\mapsto}
\newcommand{\wbar}{\widebar}
\newcommand{\Ten}{\otimes}
\newcommand{\ten}{\hat{\Ten}}
\newcommand{\YS}{\star}
\newcommand{\Id}{{Id}}
\newcommand{\Ob}[1]{{0\ra{#1}_0\ra[i_{#1}]{#1}_1\ra[\pi_{#1}]{#1}_0\ra 0}}
\newcommand{\dd}{\mathfrak{D}}

\providecommand{\1}{\mathbf{1}}
\renewcommand{\1}{\mathbf{1}}
\providecommand{\2}{\mathbf{2}}
\renewcommand{\2}{\mathbf{2}}
\newcommand{\x}{\times}
\newcommand{\Isom}{\ra[\sim]}
\newcommand{\kk}{\Bbbk}

\newcommand{\Hom}{\ul{Hom}}
\newcommand{\Aut}{\ul{Aut}}

\newcommand{\Co}[1]{\widecheck{{#1}}}

\newcommand{\Def}[1]{\emph{#1}}

\DeclareMathAlphabet{\mathpzc}{OT1}{pzc}{m}{it}

\renewcommand{\Vec}{\mathcal{V}ec}
\newcommand{\Rep}{\mathcal{R}ep}

\DeclareMathOperator{\rk}{rk}
\DeclareMathOperator{\dlog}{dlog}
\DeclareMathOperator{\im}{im}
\DeclareMathOperator{\Tr}{Tr}
\DeclareMathOperator{\Sch}{\mathbb{A}}
\renewcommand{\d}{\partial}

\newcommand{\Ind}[2][]{\operatorname{Ind}_{#1}({#2})}

\newcommand{\dcl}[1]{\operatorname{dcl}(#1)}

\newcommand{\point}[1][]{\subsection{#1}}
\newcommand{\subpoint}[1][]{\subsubsection{#1}}



\newcommand{\newthm}[2]{\newtheorem{#1}[subsection]{#2}\newtheorem{#1s}[subsubsection]{#2}}
\newthm{theorem}{Theorem}
\newthm{claim}{Claim}
\newthm{cor}{Corollary}
\newthm{prop}{Proposition}
\newthm{lemma}{Lemma}
\theoremstyle{remark}
\newthm{remark}{Remark}
\newthm{example}{Example}
\theoremstyle{definition}
\newthm{defn}{Definition}

\newcommand{\DefAlias}[2]{\expandafter\xdef\csname #1\endcsname{#2}}
\newcommand{\CiteAlias}[2]{\DefAlias{CITE#1}{#2}}
\providecommand{\Cite}[2][]{}
\renewcommand{\Cite}[2][]{\Ifempty{#1}{\citet{\csname CITE#2\endcsname}}{\citet[#1]{\csname CITE#2\endcsname}}}

\CiteAlias{Dgal}{MR1972449}
\CiteAlias{CWM}{MR1712872}
\CiteAlias{LNM900II}{LNM900II}
\CiteAlias{Topos}{MR1300636}
\CiteAlias{prodef}{prodef}
\CiteAlias{Rep2Dim}{MR2100671}
\CiteAlias{EtHom}{MR883959}
\CiteAlias{SGA4}{MR0354652}
\CiteAlias{SGA1}{MR2017446}
\CiteAlias{groupoids}{groupoid}
\CiteAlias{sacks}{MR0398817}
\CiteAlias{etale}{MR559531}
\CiteAlias{Deligne}{MR1106898}
\CiteAlias{acfgroups}{MR1827833}
\CiteAlias{defaut}{defaut}
\CiteAlias{Makkai}{MR0505486}
\CiteAlias{cassidy}{MR0360611}
\CiteAlias{cassidy2}{MR0409426}
\CiteAlias{freyd}{MR0166240}
\CiteAlias{ovchinnikov2}{AR0703422}
\CiteAlias{ovchinnikov1}{AR0702846}
\CiteAlias{Saavedra}{MR0338002}
\CiteAlias{DCF}{MR1773702}
\CiteAlias{DCFv}{MR2215060}
\CiteAlias{Waterhouse}{MR547117}
\CiteAlias{Kolchin}{MR0568864}
\CiteAlias{Zilberint}{MR606796}
\CiteAlias{PoizatGalois}{MR727805}
\CiteAlias{HrStable}{MR1081816}
\CiteAlias{simple}{MR2140036}
\CiteAlias{Pillay}{MR2094550}
\CiteAlias{Obch}{MR2480859}
\CiteAlias{Obchi}{MR2426137}
\CiteAlias{marker}{MR1924282}
\CiteAlias{pillayMT}{PillayMT}
\CiteAlias{pillayDAG}{MR1452531}

\title{Model theory and the Tannakian formalism}
\author[M. Kamensky]{Moshe Kamensky}
\address{
  Department of Math \\
  University of Notre-Dame \\
  Notre-Dame, IN, 46556\\
  USA
}
\email{\url{mailto:kamensky.1@nd.edu}}
\urladdr{\url{http://mkamensky.notlong.com}}

\begin{document}
\begin{abstract}
  We draw the connection between the model theoretic notions of internality 
  and the binding group on one hand, and the Tannakian formalism on the 
  other.
  
  More precisely, we deduce the fundamental results of the Tannakian 
  formalism by associating to a Tannakian category a first order theory, and 
  applying the results on internality there. We also formulate the notion of 
  a differential tensor category, and a version of the Tannakian formalism 
  for differential linear groups, and show how the same techniques can be 
  used to deduce the analogous results in that context.
\end{abstract}
\maketitle

\section*{Introduction}
The aim of this paper is to exhibit the analogy and relationship between two 
seemingly unrelated theories. On the one hand, the Tannakian formalism, 
giving a duality theory between affine group schemes (or, more generally, 
gerbs) and a certain type of categories with additional structure, the 
Tannakian categories. On the other hand, a general notion of internality in 
model theory, valid for an arbitrary first order theory, that gives rise to a 
definable Galois group. The analogy is made precise by deriving (a weak 
version of) the fundamental theorem of the Tannakian duality 
(Theorem~\ref{tan:main}) using the model theoretic internality.

The Tannakian formalism assigns to a group \(\sch{G}\) over a field \(\kk\), 
its category of representations \(\Rep_\sch{G}\). In the version we are 
mainly interested in, due to Saavedra (\Cite{Saavedra}), the group is an 
affine group scheme over a field. A similar approach works with groups in 
other categories, the first due to Krein and Tannaka, concerned with locally 
compact topological groups.  Another example is provided in 
Section~\ref{dif}.  In the algebraic case, the category is the category of 
algebraic finite-dimensional representations.  This is a \(\kk\)-linear 
category, but the category structure alone is not sufficient to recover the 
group. One therefore considers the additional structure given by the tensor 
product. The Tannakian formalism says that \(\sch{G}\) can be recovered from 
this structure, together with the forgetful functor to the category of 
vector-spaces. The other half of the theory is a description of the tensor 
categories that arise as categories of representations: any tensor category 
satisfying suitable axioms is of the form \(\Rep_\sch{G}\), provided it has a 
``fibre functor'' into the category of vector spaces. Our main references for 
this subject are the first three sections of~\Cite{LNM900II} 
and~\Cite{Deligne}.

In model theory, internality was discovered by Zilber as a tool to study the 
structure of strongly minimal structure (\Cite{Zilberint}). Later, Poizat 
realised (in~\Cite{PoizatGalois}) that this notion can be used to treat the 
Galois theory of differential equations. The definable Galois correspondence 
outlined in Theorem~\ref{mod:internality} has its origins there. Later, the 
theory was generalised to larger classes of theories (\Cite{HrStable}, 
\Cite{simple}, etc.), and applied in various contexts (e.g., \Cite{Pillay} 
extended the differential Galois theory to arbitrary ``\(D\)-groups'' 
definable in \(DCF\)).

In appendix B of~\Cite{Dgal}, internality was reformulated in a way that 
holds in an arbitrary theory. One is interested in the group of automorphisms 
\(G\) of a definable set \(\oQ\) over another definable set \(\oC\).  A set 
\(\oQ\) is internal to another set \(\oC\) if, after extending the base 
parameters, any element of \(\oQ\) is definable over the elements of \(\oC\).  
The idea is that under this condition, \(\oQ\) is close enough to \(\oC\) so 
that the \(G\) has a chance to be definable, but the requirement that a base 
extension is required prevents it from being trivial.  The theorem is that 
indeed \(G\) is the group of points of a (pro-) definable group 
(see Theorem~\ref{mod:internality}). This theory is reformulated again 
in~\Cite{groupoids}, where the same construction is described as an abstract 
duality theory between definable groupoids in a theory \(\th{T}\), and 
certain expansions of it, called internal covers. It is this formulation that 
we use.

The main results of the paper appear in Sections~\ref{alg} and~\ref{dif}.  
In Section~\ref{alg} we apply internality to prove the fundamental result on 
Tannakian categories (Theorem~\ref{tan:main}). This is done by constructing, 
for a tensor category \(\Cc\) an internal cover \(T_\Cc\) of \(ACF_\kk\).  
Models of this theory correspond, roughly, with fibre functors on \(\Cc\).  
The theory of internality provides a definable group in \(ACF_\kk\), and this 
is the group corresponding to \(\Cc\). The other parts of the theory follow 
from the Galois theory, and from the abstract duality theory 
of~\Cite{groupoids}.  

The result we prove is weaker than the original Theorem~\ref{tan:main} in the 
following ways. First,~\ref{tan:main}\eqref{tan:mainexr} states that a 
certain functor is representable by an affine algebraic group, but we only 
prove that its restriction to the category of fields agrees with such a group 
(see also Question~\ref{q:functor} below). Second, our proof works only in 
characteristic \(0\).  Finally,~\ref{tan:main}\eqref{tan:equiv} is covered, 
in part, by the general model theoretic statement~\ref{mod:action}, but the 
rest of the proof is only sketched, since it is not significantly different 
from the proof found in~\Cite{LNM900II}.

On the other hand, the proof has the advantage that it is simple and more 
``geometric'' than the algebraic one. It also has the advantage that the 
method is applicable in a more general context. One such application, 
concerning usual Galois theory, is briefly discussed in 
Remark~\ref{alg:galois}.  A more detailed application appears in 
Section~\ref{dif}, where we define the notion of a differential tensor 
category, and explain how the same model theoretic approach gives an 
analogous theorem (Theorem~\ref{dif:main}) there (though the method again 
gives a weaker result, in the same way as in the algebraic case).  A similar 
result, using a somewhat different language, was first obtain using algebraic 
methods in~\citet{\CITEObchi,\CITEObch}. It seems obvious that similar 
formalisms are possible in other contexts (e.g., difference fields, real 
closed fields). We note that, though model theory provides a general method 
of proof of such results, the right notion of a ``tensor category'' is new, 
and cannot be deduced directly from this method.

The first two sections contain background material. Section~\ref{mod} 
contains some general notions from model theory, as well as a statement of 
internality result, and some auxiliary remarks on it. Section~\ref{tan} 
gives a short review of Tannakian categories, with enough terminology and 
results to state the main theorem. Both sections are provided with the hope 
that the paper will be accessible to a wide audience (both within model 
theory and outside it), and that the information contained there will provide 
the reader with enough information to at least get a feeling for the nature 
of the results.

We mention also that there is a result in the other direction: It is possible 
to define the analogue of internal covers and recover a group object in a 
general categorical context, and recover the model theoretic result from it.  
This result will appear separately.

\point[Questions]
Several questions (some vague) remain unanswered in the paper. The main ones, 
from my point of view, are the following.

\subpoint
As described in Remarks~\ref{alg:deligne1} and~\ref{alg:deligne2}, the main 
results of~\Cite{Deligne} have very natural model theoretic translations. It 
seems reasonable to expect that there is a model theoretic proof, especially 
of the main theorem (which translates to having no new structure on 
\(ACF_\kk\)), but I could not find it. The case when \(\Cc\) is neutral 
appears to be easier from the algebraic point of view (and is proven already 
in~\Cite{LNM900II}), but model theoretically I don't know how to do even this 
case.

\subpoint
Conversely, the results and methods of~\Cite{LNM900II} and~\Cite{Deligne} 
seem to suggest that one can do ``model theory'' inside a Tannakian category 
(and perhaps more generally in a tensor category). For example, as can be 
seen from~\ref{alg:equiv}, the statement of~\ref{tan:main}\eqref{tan:equiv} 
can be viewed as saying that \(\Cc\) has elimination of quantifiers and 
elimination of imaginaries. I think it would be interesting to find out if 
this makes sense, and make it precise.

\subpoint\label{q:functor}
What is a general model theoretic machinery to prove results as 
in~\ref{alg:mainexr}, but in full generality? (rather than just for fields).  
This can also be asked purely in terms of functors (though in this context 
there is probably no answer): Assume there is a functor \(\sch{F}\) from 
\(\kk\)-algebras to sets (or groups), and a scheme (or group scheme) 
\(\sch{G}\) with a map \(\sch{G}\ra\sch{F}\) that is a bijection on fields.  
What are conditions on \(\sch{F}\) that allow us to deduce that this map is 
an isomorphism?

\subsection*{Acknowledgements}
I would like to thank Ehud Hrushovski for suggesting this approach, and for 
help along the way. I would also like to thank Anand Pillay for his interest 
and useful remarks.

\section{Model theory and internality}\label{mod}
In this section, we recall some notions from model theory. Some elements of  
the model theoretic language are briefly sketched in~\ref{mod:intro} 
through~\ref{mod:imag}. These appear in most texts on model theory, for 
example \Cite{marker} or~\Cite{pillayMT}.

In the rest of the section, we recall the main model theoretic tool that we 
use, namely internality, and the definable Galois group. We shall, for the 
most part, follow the presentation and terminology from~\Cite{groupoids}.

The only new results in this section appear after the statement of the 
theorem (Theorem~\ref{mod:internality}), though most of them are implicit 
in~\Cite{groupoids}.

\point[Basic terminology]\label{mod:intro}
We briefly recall that a \Def{theory} is collection of statements (axioms) 
written in a fixed formal language, and a \Def{model} of the theory 
\(\th{T}\) is a structure consisting of an interpretation of the symbols in 
the language of \(\th{T}\), in which all the axioms of \(\th{T}\) hold.

For example, the theory \(\th{T}=ACF\) of algebraically closed fields can be 
written in a language containing symbols \((0,1,+,-,\cdot)\), and a model of 
this theory is a particular algebraically closed field.

A \Def{formula} is written in the same formal language, but has \Def{free 
variables}, into which elements of the model can be plugged. For the example 
of \(ACF\) above, any finite collection of polynomial equations and 
inequalities can be viewed as a formula, but there are other formulas, 
involving quantifiers. Any such formula \(\phi(x_1,\dots,x_n)\) (where 
\(x_1,\dots,x_n\) contain all free variables of \(\phi\)) thus determines a 
subset \(\phi(\md{M})\) of \(\md{M}^n\), for any model \(\md{M}\), namely, 
the set of all tuples \(\bar{a}\) for which \(\phi(\bar{a})\) holds. Two 
formulas \(\phi\) and \(\psi\) are \Def{equivalent} if 
\(\phi(\md{M})=\psi(\md{M})\) for all models \(\md{M}\). An equivalence class 
under this relation is called a \Def{definable set} (So, in this paper, 
``definable'' is without parameters.)

If \(\md{M}\) is the model of some theory, the set of all statements (in the 
underlying language) that are true in \(\md{M}\) is a theory 
\(\th{T}(\md{M})\), and \(\md{M}\) is a model of \(\th{T}(\md{M})\).

We will assume all our theories to be multi-sorted, i.e., the variables of a 
formula can take values in any number of disjoint sets. In fact, if 
\(\oX\) and \(\oY\) are sorts, we view \(\oX\x\oY\) as a new 
sort. In particular, any definable set is a subset of some sort. By a 
statement such as ``\(a\in\md{M}\)'' we will mean that \(a\) is an element of 
one of the sorts, interpreted in \(\md{M}\).

\point[Definable closure and automorphisms]
A \Def{definable function} \(f\) from a definable set \(\oX\) to another 
definable set \(\oY\) is a definable subset of \(\oX\x\oY\) that 
determines the graph of a function 
\(f_\md{M}:\oX(\md{M})\ra\oY(\md{M})\) for each model \(\md{M}\). If 
\(\md{M}\) is a model, \(A\subseteq\oX(\md{M})\) any subset, and 
\(b\in\oY(\md{M})\), then \(b\) is \Def{definable} over \(A\) if there is 
a definable function \(f:\oX^n\ra\oY\) for some \(n\), such that 
\(f(\bar{a})=b\) for some \(\bar{a}\in{}A^n\) (note that even though we are 
working in \(\md{M}\), it is \(\th{T}\) that should think that \(f\) is a 
function, rather than just \(\th{T}(\md{M})\)). A subset is \Def{definably 
closed} if it is closed under definable functions.  The  \Def{definable 
closure} \(\dcl{A}\) of a subset \(A\subseteq\md{M}\) is the smallest 
definably closed subset of \(\md{M}\) containing \(A\). We denote by 
\(\oY(A)\), the set of all elements in \(\oY(\md{M})\) definable over 
\(A\).

We note that if \(\th{T}\) is complete, then the definable closure 
\(\oX(0)\) of the empty set does not depend on the model. For general 
\(\th{T}\), we denote by \(\oX(0)\) the set of definable subsets of 
\(\oX\) containing one element.

More generally, a formula over \(A\) is a formula \(\phi(x,a)\), where 
\(\phi(x,y)\subseteq\oX\x\oY\) is a regular formula, and 
\(a\in\oY(A)\). It defines a subset of \(\oX(\md{M})\) for any model 
\(\md{M}\) containing \(A\), in the same way as regular formulas do.

An automorphism of a model \(\md{M}\) is a bijection \(\nat{f}\) from 
\(\md{M}\) to itself such that the induced bijection 
\(\nat{f}^n:\md{M}^n\ra\md{M}^n\) preserves any definable subset 
\(\oX(\md{M})\subseteq\md{M}^n\). If \(A\subseteq\md{M}\) is a set of 
parameters, the automorphism \(\nat{f}\) is \Def{over \(A\)} if it fixes all 
elements of \(A\). The group of all automorphisms of \(\md{M}\) over \(A\) is 
denoted by \(Aut(\md{M}/A)\). It is clear that \(\oX(A)\) is fixed pointwise 
by any automorphism over \(A\). The model \(\md{M}\) is called 
\Def{homogeneous} if, conversely, for any \(A\subseteq\md{M}\) of cardinality 
smaller than the cardinality of \(\md{M}\), for any definable set \(\oX\), 
any element of \(\oX(\md{M})\) fixed by all of \(Aut(\md{M}/A)\) is in 
\(\oX(A)\) (computed in \(\th{T}(\md{M})\)).  Homogeneous models are known to 
exist (cf.~\Cite[Ch.~20]{sacks}).

\point[Imaginaries and interpretations]\label{mod:imag}
A \Def{definable equivalence relation} on a definable set \(\oX\) is a 
definable subset of \(\oX\x\oX\) that determines an equivalence 
relation in any model. The theory \Def{eliminates imaginaries} if any 
equivalence relation has a quotient, i.e., any equivalence relation can be 
represented as \(f(x)=f(y)\) for some definable function \(f\) on 
\(\oX\).

If \(\th{T}\) eliminates imaginaries, an \Def{interpretation} \(i\) of 
another theory \(\th{T}_0\) in \(\th{T}\) specified by giving, for each sort 
\(\oX_0\) of \(\th{T}_0\) a definable set \(\oX=i(\oX_0)\) of \(\th{T}\), and 
for each atomic relation \(\oY_0\subseteq\oX_0\) of \(\th{T}_0\) a definable 
subset \(\oY=i(\oY_0)\) of \(\oX\), such that for any model \(\md{M}\) of 
\(\th{T}\), the sets \(\oX(\md{M})\) form a model \(\md{M}_0\) of 
\(\th{T}_0\) (when interpreted as a \(\th{T}_0\) structure in the obvious 
way). It follows that any definable set \(\oZ_0\) of \(\th{T}_0\) determines 
a definable set \(i(\oZ_0)\) in \(\th{T}\), and \(\md{M}_0\) can be viewed as 
a subset of \(\md{M}\) (there is a universal interpretation of any theory 
\(\th{T}\) in a theory \(\th{T}^{eq}\) that eliminates imaginaries. An 
interpretation of \(\th{T}_0\) in a general theory \(\th{T}\) is then defined 
to be an interpretation of \(\th{T}_0\) in \(\th{T}^{eq}\) in the sense 
already defined.)

If \(\md{M}\) is a model of a theory \(\th{T}\), and \(A\subseteq\md{M}\), we 
denote by \(\th{T}_A\) the theory obtained by adding constants for \(A\) to 
the language, and the axioms satisfied for \(A\) in \(\md{M}\) (in 
particular, \(\th{T}_A\) is complete). There is an obvious interpretation of 
\(\th{T}\) in \(\th{T}_A\).  The model \(\md{M}\) is in a natural way a model 
of \(\th{T}_A\), and for any definable set \(\oX\) of \(\th{T}\), \(\oX(A)\) 
in the sense of \(\th{T}\) is identified with \(\oX(0)\) in \(\th{T}_A\).

\point[Internal covers]\label{mod:cover}
An interpretation \(i\) of \(\th{T}_0\) in \(\th{T}\) is \Def{stably 
embedded} if any subset of the sorts of \(\th{T}_0\) definable in \(\th{T}\) 
with parameters from \(\th{T}\) is also definable in \(\th{T}_0\) with 
parameters from \(\th{T}_0\). More precisely, given definable sets \(\oY_0\) 
in \(\tT_0\), and \(\oX\) and \(\oZ\subseteq\oX\x{}i(\oY_0)\) in \(\tT\), 
there are definable sets \(\oW_0\) and \(\oZ_0\subseteq\oW_0\x\oY_0\) in 
\(\tT_0\), such that \(\tT\) implies that for each \(x\in\oX\), 
\(\oZ_x=i(\oZ_0)_w\) for some \(w\in{}i(\oW_0)\).

\begin{defn}
  A stably embedded interpretation \(i\) of \(\th{T}_0\) in \(\th{T}\) is an 
  \Def{internal cover} (of \(\th{T}_0\)) if there is a stably embedded 
  interpretation \(\pi\) of \(\tT\) in \(\tT_0\), such that \(\pi\circ{}i\) 
  is the identity.

  The theory \(\tT\) is also called an internal cover of \(\tT_0\) (if \(i\) 
  is understood).
\end{defn}

If \(\oQ\) is a definable set in \(\tT\) (an internal cover of \(\tT_0\)), 
the identity map on \(\pi(\oQ)\) comes, by assumption, from some bijection 
from \(\oQ\) to \(i(\pi(\oQ))\) in \(\tT\), definable with parameters. Hence 
there is a set of parameters \(A\), such that 
\(\dcl{\md{M}_0\cup{}A}=\md{M}\) for any model \(\md{M}\) of \(\tT\). Such a 
set \(A\) will be called a set of \Def{internality parameters}, and will 
always be taken to be definably closed. We denote by \(A_0\) the restriction 
of \(A\) to \(\th{T}_0\). (In the language of~\Cite{groupoids}, this implies 
that the corresponding groupoid  in \(\th{T}_0\) is equivalent to a groupoid 
with objects over \(A_0\).)

We note that by compactness, a finite number of elements of \(A\) suffices to 
define all elements of a given sort of \(\th{T}\) over \(\md{M}_0\). Thus, if 
\(\th{T}\) has a finite number of sorts that do not come from \(\th{T}_0\) 
over which everything is definable, then \(A\) can be taken to be generated 
by one tuple \(a\). In general, \(A\) can be thought of as an element of a 
pro-definable set.

The following formulation of the internality theorem is closest in language 
and strength to the one in~\Cite[prop.~1.5]{groupoids}. This is the language 
we would like to use later, but in the strength we actually require (namely, 
for \(\w\)-stable theories), it was already proved in~\Cite{PoizatGalois}.

\begin{theorem}[{\Cite[Appendix~B]{Dgal},\Cite[prop.~1.5]{groupoids}}]\label{mod:internality}
  Let \(\th{T}\) be an internal cover of \(\th{T}_0\). There is a pro-group 
  \(\oG\) in \(\th{T}\), together with a definable action 
  \(m_Q:\oG\x\oQ\ra\oQ\) of \(\oG\) on every definable set \(\oQ\) of 
  \(\th{T}\), such that for any model \(\md{M}\) of \(\th{T}\), 
  \(\oG(\md{M})\) is identified with \(Aut(\md{M}/\md{M}_0)\) through this 
  action (with \(\md{M}_0\) the restriction of \(\md{M}\) to \(\th{T}_0\)).

  Furthermore, given a set of internality parameters \(A\) (whose restriction 
  to \(\th{T}_0\) is \(A_0\)), there is a Galois correspondence between 
  \(A\)-definable pro-subgroups of \(\oG\) and definably closed subsets 
  \(A_0\subseteq{}B\subseteq{}A\). Such a subset \(B_H\) for a subgroup 
  \(\oH\) is always of the form \(\oC(A)\), where \(\oC\) is an 
  \(A\)-definable ind-set in \(\th{T}\), and \(\oH\) is the subgroup of 
  \(\oG\) fixing \(\oC(\md{M})\) pointwise. If \(\oH\) is normal, then 
  \(\oG/\oH(\md{M})\) is identified with \(Aut(\oC(\md{M})/\md{M}_0)\).
\end{theorem}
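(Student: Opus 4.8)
The approach I would take is to realise the binding group \(\oG\) concretely as the orbit of a pro-tuple of internality parameters, transporting the group law and the action on each \(\oQ\) from composition of automorphisms. Work in a homogeneous model \(\md{M}\) of \(\th{T}\), large enough that the homogeneity property applies to all the small parameter sets below, with \(\md{M}_0\) its restriction to \(\th{T}_0\); fix a definably closed set \(A\) of internality parameters enumerated by a pro-tuple \(a\), so that \(\dcl{\md{M}_0\cup A}=\md{M}\). First I would note that \(\sigma\mapsto\sigma(a)\) is an injection \(Aut(\md{M}/\md{M}_0)\hookrightarrow\md{M}\): if \(\sigma(a)=\tau(a)\) then \(\sigma\) and \(\tau\) agree on \(\md{M}_0\) and on \(a\), hence on every \(f(m,a)\) with \(m\in\md{M}_0\) and \(f\) a definable function, i.e.\ on all of \(\md{M}\). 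Its image is the orbit \(\oG_a\df\{\sigma(a):\sigma\in Aut(\md{M}/\md{M}_0)\}\), which by homogeneity is exactly the set of realisations in \(\md{M}\) of \(\mathrm{tp}(a/\md{M}_0)\).

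The crucial step is to see that this orbit is already cut out over a \emph{small} parameter set, and this is where ``internal cover'' — stable embeddedness of both \(i\) and \(\pi\) with \(\pi\circ i=\Id\) — enters in full. Put \(a_0\df\pi(a)\); every \(\sigma\in Aut(\md{M}/\md{M}_0)\) fixes \(a_0\), and \(\md{M}_0\) is \(\sigma\)-invariant as a set. I claim \(\oG_a=\{b:\mathrm{tp}(b/a_0)=\mathrm{tp}(a/a_0)\}\). One inclusion is clear. For the other, given \(b\equiv_{a_0}a\) pick \(\tau\in Aut(\md{M}/a_0)\) with \(\tau(a)=b\); then \(\tau\) restricts to an automorphism of the \(\th{T}_0\)-structure \(\md{M}_0\), and \(\md{M}=\dcl{\md{M}_0\cup\{b\}}\). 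Define \(\rho\) on \(\md{M}\) by \(\rho(f(m,b))\df f((\tau|_{\md{M}_0})^{-1}(m),b)\); by stable embeddedness of \(i\), any relation on \(\md{M}_0\) definable in \(\th{T}\) with the parameter \(b\) is already \(\th{T}_0\)-definable with parameters from \(\md{M}_0\), hence preserved by \(\tau|_{\md{M}_0}\), and this makes \(\rho\) a well-defined automorphism with \(\rho|_{\md{M}_0}=(\tau|_{\md{M}_0})^{-1}\) and \(\rho(b)=b\). Then \(\sigma\df\rho\circ\tau\) fixes \(\md{M}_0\) pointwise and sends \(a\) to \(b\), proving the claim. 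Consequently \(\oG\df\oG_a\) is type-definable over \(a_0\), i.e.\ over \(A_0\); composition of automorphisms induces a pro-definable group law on it — writing a coordinate of \(\tau(a)\) as \(f(m,a)\) and applying \(\sigma\) exhibits \(\sigma(\tau(a))\) as a fixed pro-definable function of \((\sigma(a),\tau(a))\), and inversion is symmetric — and the same recipe in any model \(\md{M}'\) gives the identification \(\oG(\md{M}')=Aut(\md{M}'/\md{M}'_0)\). A standard further descent, as in~\Cite[prop.~1.5]{groupoids}, removes the dependence on the chosen \(a\) and places \(\oG\) over \(\emptyset\).

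For the action on a definable set \(\oQ\) of \(\th{T}\), after discarding the sorts coming from \(\th{T}_0\) I may assume \(\oQ\) is new. The definition of internal cover supplies a parameter-definable bijection \(\oQ\Isom i(\pi(\oQ))\); absorbing its parameters into \(a\), this takes the form \(q\mapsto G(q,a)\) for an \(\emptyset\)-definable \(G\) with \(\emptyset\)-definable inverse \(h\), so each \(q\in\oQ(\md{M})\) equals \(h(q_0,a)\) with \(q_0=G(q,a)\in\md{M}_0\). As \(\sigma\) fixes \(q_0\) we get \(\sigma(q)=h(q_0,\sigma(a))\), so I would set \(m_Q(\sigma,q)\df h(G(q,a),\sigma(a))\); compactness collects the finitely many charts needed into one formula, yielding a definable map \(\oG\x\oQ\ra\oQ\) which is an action since it is induced by automorphism composition, and which recovers the natural action of \(Aut(\md{M}/\md{M}_0)\) on \(\oQ(\md{M})\).

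Finally, for the Galois correspondence fix a set \(A\) of internality parameters with restriction \(A_0\). To a definably closed \(B\) with \(A_0\subseteq B\subseteq A\) assign \(\oH_B\df\{\sigma\in\oG:\sigma b=b\text{ for all }b\in B\}\), an \(A\)-definable pro-subgroup; to an \(A\)-definable pro-subgroup \(\oH\) assign \(\oC_{\oH}\), the set of \(\oH\)-fixed points inside the pro-sort carrying \(A\), and \(B_H\df\oC_{\oH}(A)\). That \(\oC_{\oH}\) is ind-definable, not merely pro-definable, follows by compactness: a point fixed by the whole pro-group \(\oH\) is fixed by one of its definable approximations. The inclusions \(B\subseteq\oC_{\oH_B}(A)\) and \(\oH\subseteq\oH_{B_H}\) are formal; for the reverse ones, a point of \(A\) outside the definably closed set \(B\) is moved by some automorphism over \(\md{M}_0\cup B\), hence by an element of \(\oH_B\), while the argument of the second paragraph, applied to \(\oH\) in place of \(\oG\), shows the \(\oH\)-orbit of \(a\) to be type-definable over \(B_H\), so every \(\sigma\) fixing \(B_H\) already lies in \(\oH\). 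When \(\oH\) is normal, \(\oG/\oH\) — pro-definable, as a quotient of a pro-group by a pro-subgroup in \(\th{T}^{eq}\) — acts faithfully on \(\oC_{\oH}(\md{M})\) and onto \(Aut(\oC_{\oH}(\md{M})/\md{M}_0)\), because by homogeneity every automorphism of the definably closed set \(\oC_{\oH}(\md{M})\) fixing \(\md{M}_0\) extends to \(\md{M}\). I expect the main obstacle to be the pro-definability bookkeeping in the second paragraph — verifying that the reduction to the small parameters \(a_0\) and the subsequent descent away from \(a\) are uniform across the approximating definable pieces — the rest being the classical orbit-of-automorphisms argument.
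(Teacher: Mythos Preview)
The paper does not prove this theorem: it is stated with attribution to the two cited references and then used as a black box. There is no in-paper proof to compare your proposal against.

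That said, your sketch follows the standard construction in those references: realise \(\oG\) as the locus of \(\mathrm{tp}(a/A_0)\) for the internality-parameter tuple \(a\), use stable embeddedness of \(\th{T}_0\) to show this already coincides with the \(Aut(\md{M}/\md{M}_0)\)-orbit (your construction of \(\rho\) is exactly the extension-of-automorphisms argument the paper records just before Proposition~\ref{mod:stronger}), and read off the group law and the actions from the orbit description. The place you pass over too quickly is the Galois correspondence. The claim that \(c\in A\setminus B\) is moved by some element of \(\oH_B\) needs \(c\notin\dcl{\md{M}_0\cup B}\), which does not follow from \(c\notin B=\dcl{B}\) alone; it again requires the splitting of \(Aut(\md{M}/A_0)\) coming from stable embeddedness that you set up in the second paragraph but do not invoke here. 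Likewise, ``the argument of the second paragraph applied to \(\oH\)'' does not by itself give \(\oH_{B_H}\subseteq\oH\): you need that the \(\oH\)-orbit of \(a\) is cut out, inside the \(\oG\)-orbit, by \(\mathrm{tp}(a/B_H)\), and this is where \(A\)-definability of \(\oH\) together with homogeneity do real work. None of this is fatal --- the approach is the correct one and matches the literature --- but these are precisely the steps where the cited proofs spend their effort.
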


Here, a \Def{pro-group} is a filtering inverse system of definable groups.

\point
In fact, the result is slightly stronger. With notation as above, the 
assumption that \(\th{T}_0\) is stably embedded implies that any automorphism 
of \(\md{M}_0\) fixing \(\dcs{A}_0\) can be extended (uniquely) to an 
automorphism of \(\md{M}\) fixing \(\dcs{A}\).  In other words, we have a 
split exact sequence
\begin{equation}
  0\ra\oG(\md{M})\ra Aut(\md{M}/\dcs{A}_0)\ra Aut(\md{M}_0/\dcs{A}_0)\ra 0
\end{equation}
where \(\oG\) is as provided by Theorem~\ref{mod:internality}.

More generally, we have the following interpretation of \(\oG(\dcs{B})\).
\begin{prop}\label{mod:stronger}
  Let \(\dcs{B}_0\subseteq\md{M}_0\) be a definably closed set containing 
  \(A_0\), and let \(\dcs{B}=\dcl{A\cup\dcs{B}_0}\). Then 
  \(\oG(\dcs{B})=Aut(\dcs{B}/\dcs{B}_0)\).
\end{prop}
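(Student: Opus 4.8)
The plan is to prove the special case $\dcs{B}_0=A_0$ (so $\dcs{B}=A$) first, and then obtain the general statement by adjoining constants for $\dcs{B}_0$. In the special case I would construct a group isomorphism $\rho\colon\oG(A)\ra Aut(A/A_0)$ by $g\mapsto\sg_g|_A$, where $\sg_g\in Aut(\md{M}/\md{M}_0)$ is the automorphism attached to $g$ by Theorem~\ref{mod:internality} via the actions $m_Q$. To see that $\rho$ is well defined: if $g\in\oG(A)$ then for $a$ in any sort occurring in $A$ we have $\sg_g(a)=m_Q(g,a)\in\dcl{g,a}\subseteq\dcl{A}=A$, since the action is $\es$-definable; applying this to $g^{-1}$ shows $\sg_g$ restricts to a permutation of $A$, and as the restriction of a genuine automorphism it preserves all definable relations and fixes $A_0\subseteq\md{M}_0$ pointwise, so $\sg_g|_A\in Aut(A/A_0)$. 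It is a homomorphism because $g\mapsto\sg_g$ is one, and it is injective because if $\sg_g|_A=\Id$ then $\sg_g$ fixes $A$ and $\md{M}_0$, hence fixes $\dcl{A\cup\md{M}_0}=\md{M}$ (the defining property of internality parameters), so $g=e$.

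Surjectivity of $\rho$ is the crux. Given $\tau\in Aut(A/A_0)$, note $\tau$ is a partial elementary map over $A_0$, so by homogeneity of $\md{M}$ (and smallness of $A$) it extends to $\ti{\tau}\in Aut(\md{M}/A_0)$; put $\wbar{\tau}=\ti{\tau}|_{\md{M}_0}\in Aut(\md{M}_0/A_0)$. Using the split exact sequence recorded just before the statement, let $s(\wbar{\tau})\in Aut(\md{M}/A_0)$ be the unique extension of $\wbar{\tau}$ that fixes $A$ pointwise. Then $g\df\ti{\tau}\circ s(\wbar{\tau})^{-1}$ restricts to the identity on $\md{M}_0$, hence $g\in Aut(\md{M}/\md{M}_0)=\oG(\md{M})$, and $\sg_g|_A=\ti{\tau}|_A\circ(s(\wbar{\tau})|_A)^{-1}=\tau$. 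It remains to check $g\in\oG(A)$, i.e.\ that $g$ is definable over $A$ and not merely pinned down by its restriction to $A$: writing $A=\dcl{a}$ for a (pro-)tuple $a$, the stabilizer of $a$ in $\oG$ is trivial (an element fixing $a$ fixes $A$ and $\md{M}_0$, hence $\md{M}$), so the orbit map $h\mapsto m_Q(h,a)$ is an injective pro-definable map and therefore $g\in\dcl{a,m_Q(g,a)}=\dcl{a,\tau(a)}\subseteq\dcl{A}=A$.

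For the general case I would pass to $\tT'=\tT_{\dcs{B}_0}$ and $\tT'_0=(\tT_0)_{\dcs{B}_0}$. One checks that $\tT'$ is again an internal cover of $\tT'_0$ — stable embeddedness of $\tT_0$ and the section $\pi$ both survive the addition of constants taken from $\dcs{B}_0\subseteq\md{M}_0$ — that its binding pro-group is still $\oG$ with $\oG(\md{M})=Aut(\md{M}/\md{M}_0)$ (an automorphism fixing $\md{M}_0$ automatically fixes $\dcs{B}_0$), and that $\dcs{B}=\dcl{A\cup\dcs{B}_0}$ is a set of internality parameters for $\tT'$ whose restriction to $\tT'_0$ is exactly $\dcs{B}_0$, the last point using stable embeddedness together with the normalization $\dcl{A}\cap\md{M}_0=A_0$. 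Since the notion of ``definable over $\dcs{B}$'' is unchanged by the passage to $\tT'$, applying the special case to $\tT'$ yields $\oG(\dcs{B})=Aut(\dcs{B}/\dcs{B}_0)$.

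I expect the main obstacle to be the surjectivity step of the special case, and within it the verification that the automorphism $g$ produced is genuinely definable over $A$; this is exactly where the faithfulness of the $\oG$-action and the role of $A$ as a generating set of $\md{M}$ over $\md{M}_0$ are indispensable. The bookkeeping in the base-change reduction, in particular identifying the restriction of $\dcs{B}$ to $\tT_0$ with $\dcs{B}_0$, is routine but needs care.
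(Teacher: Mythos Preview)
Your argument is correct, but it is organised differently from the paper's. The paper handles arbitrary \(\dcs{B}_0\) in one pass: it first observes \(\dcs{B}\cap\md{M}_0=\dcs{B}_0\) (from the splitting recorded just before the proposition), so any \(\tau\in Aut(\dcs{B}/\dcs{B}_0)\) can be extended by the identity on \(\md{M}_0\), and then stable embeddedness lifts this to an automorphism of \(\md{M}\) over \(\md{M}_0\); the resulting \(g\in\oG(\md{M})\) is seen to lie in \(\oG(\dcs{B})\) because it is fixed by every automorphism over \(\dcs{B}\). You instead isolate the case \(\dcs{B}_0=A_0\), produce \(g\) by first extending \(\tau\) arbitrarily via homogeneity and then correcting with the section \(s\) of the split exact sequence, and verify \(g\in\oG(A)\) more explicitly through freeness of the action on \(a\); the general case you then obtain by naming constants for \(\dcs{B}_0\). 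Both routes use exactly the same ingredients (the splitting, stable embeddedness, and \(\md{M}=\dcl{A\cup\md{M}_0}\)); the paper's is shorter and avoids the base-change bookkeeping, while yours makes the role of each ingredient---especially the faithfulness of the \(\oG\)-action in pinning \(g\) down over \(A\)---more transparent. Your identification \(\dcs{B}\cap\md{M}_0=\dcs{B}_0\) in the reduction step is exactly the point the paper uses directly, and it does require the splitting (not bare stable embeddedness), as you indicate.
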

\begin{proof}
  Any \(g\in\oG(\dcs{B})\) acts as an automorphism, preserves \(\dcs{B}\) as 
  a set (since \(B\) is definably closed), and fixes \(B_0\) pointwise, so 
  \(\oG(\dcs{B})\subseteq{}Aut(\dcs{B}/\dcs{B}_0)\).
  
  Conversely, since \(\dcs{B}_0\) is definably closed and contains 
  \(\dcs{A}_0\), \(\dcs{B}\cap\md{M}=\dcs{B}_0\), and so any automorphism of 
  \(\dcs{B}\) over \(\dcs{B}_0\) extends to an automorphism of \(\dcs{B}\) 
  over \(\md{M}_0\).  Since \(\md{M}_0\) is stably embedded, this 
  automorphism extends to an automorphism of \(\md{M}\).  Thus, any element 
  of \(Aut(\dcs{B}/\dcs{B}_0)\) is represented by some \(g\in\oG(\md{M})\).  
  Since it is fixed by any automorphism fixing \(\dcs{B}\) pointwise, it is 
  in fact in \(\oG(\dcs{B})\).
\end{proof}

\point\label{mod:nogpd}
We make the assumption that a set \(\dcs{A}\) as above can be found (in some 
model) such that the corresponding set \(\dcs{A}_0\) is (the definable 
closure of) the empty set. This is not a real assumption in the current 
context, since the results will hold in general for the theory with 
parameters from \(\dcs{A}\).

Likewise, the interpretation of \(\th{T}_0\) in \(\th{T}\) factors through a 
maximal extension \(\th{T}_1\) of \(\th{T}_0\) (i.e., \(\th{T}_1\) is the 
theory, in \(\th{T}\), of the definable sets coming from \(\th{T}_0\)). We 
assume from now on that \(\th{T}_1=\th{T}_0\).

Combining the two assumptions, we get that 
\(\dcs{B}_0\mt\dcs{B}=\dcl{\dcs{A}\cup\dcs{B}_0}\) gives an equivalence 
between definably closed sets \(\dcs{B}_0\) of \(\th{T}_0\) and definably 
closed sets \(\dcs{B}\) containing \(\dcs{A}\) of \(\th{T}\). In particular, 
we have a definable group \(\oG_\dcs{A}\) in \(\th{T}_0\).  The following 
proposition says that all definable group action of \(\oG_\dcs{A}\) in 
\(\th{T}_0\) come from the canonical action of \(\oG\) in \(\th{T}\).

\begin{prop}\label{mod:action}
  Assume~\ref{mod:nogpd}, and let \(a:\oG_\dcs{A}\x\oD\ra\oD\) be a definable 
  group action in \(\th{T}_0\). There is a definable set \(\oX_\oD\) in 
  \(\th{T}\), and an \(\dcs{A}\)-definable isomorphism of \(\oG_\dcs{A}\) 
  actions from \(\oX_\oD\) to \(\oD\).  If \(\oD\) and \(\oE\) are two such 
  \(\oG_\dcs{A}\)-sets, and \(f:\oD\ra\oE\) is a definable map of 
  \(\oG_\dcs{A}\) sets in \(\th{T}_0\), it also comes from a definable map 
  \(F:\oX_\oD\ra\oX_\oE\).
\end{prop}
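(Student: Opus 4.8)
The plan is to exploit the identification of the pro-group $\oG$ in $\tT$ with $Aut(\md{M}/\md{M}_0)$ from Theorem~\ref{mod:internality}, together with the equivalence $\dcs{B}_0\mt\dcs{B}=\dcl{\dcs{A}\cup\dcs{B}_0}$ of~\ref{mod:nogpd}, to transport the given $\oG_\dcs{A}$-action on $\oD$ from $\tT_0$ into $\tT$. First I would observe that, since $\oD$ is a definable set of $\tT_0$, it is also a definable set of $\tT$ via the interpretation $i$, and the action $a$ likewise gives a definable action of (the image of) $\oG_\dcs{A}$ in $\tT$. Under the identification of Theorem~\ref{mod:internality}, $\oG_\dcs{A}$ acts on $\oD(\md{M}_0)=\oD(\md{M})$ (equality because $\oD$ comes from $\tT_0$) as a quotient of $Aut(\md{M}/\md{M}_0)$; I would set $\oX_\oD$ to be $\oD$ itself, equipped with the canonical $\oG$-action $m_\oD$ from the theorem, and check that this $m_\oD$ factors through $\oG_\dcs{A}$ and agrees with $a$.

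The key steps, in order: (1) Note $\oG$ acts on $\oD$ via $m_\oD$; by Proposition~\ref{mod:stronger} with $\dcs{B}_0=\dcs{A}_0=\dcl{\es}$, we get $\oG(\dcs{A})=Aut(\dcs{A}/\md{M}_0)$, which under~\ref{mod:nogpd} is exactly $\oG_\dcs{A}$ read in $\tT_0$; thus $\oG$ and $\oG_\dcs{A}$ have the same points in every model and the canonical action descends. (2) Since every element of $\oD(\md{M})$ lies in $\dcl{\md{M}_0}$, in fact $\oD(\md{M})=\oD(\md{M}_0)$, so the $\oG$-action on $\oD$ is determined by its effect on $\md{M}_0$-points, hence is pulled back from a $\oG_\dcs{A}$-action on $\oD$ in $\tT_0$; by uniqueness of the definable relation computing an automorphism's action, this pulled-back action must coincide with $a$. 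This gives the required $\dcs{A}$-definable isomorphism $\oX_\oD\to\oD$ of $\oG_\dcs{A}$-sets (namely the identity). (3) For the functoriality statement, given a $\tT_0$-definable map $f\colon\oD\ra\oE$ of $\oG_\dcs{A}$-sets, view $f$ as a definable map in $\tT$ via $i$; it automatically intertwines the canonical $\oG$-actions because $f$ commutes with every element of $Aut(\md{M}/\md{M}_0)$ (each such automorphism fixes the graph of $f$ setwise, $f$ being $\tT_0$-definable without parameters), so $F=i(f)$ works.

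The main obstacle I anticipate is step (1): carefully justifying that the canonical action $m_\oD$ of the \emph{pro}-group $\oG$ on $\oD$ genuinely factors through the \emph{definable} group $\oG_\dcs{A}$, and that this factored action is itself $\dcs{A}$-definable in $\tT_0$ (not merely in $\tT$). This requires unwinding the construction of $\oG_\dcs{A}$ from~\ref{mod:nogpd}: one must see that the inverse system defining $\oG$, restricted over $\dcs{A}$, stabilizes in the appropriate sense because $\oD$ is a single definable set of $\tT_0$, so only finitely much of $\dcs{A}$ is relevant, and then use stable embeddedness of $\tT_0$ in $\tT$ to push the resulting definable object back into $\tT_0$. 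The remaining points (equality $\oD(\md{M})=\oD(\md{M}_0)$, uniqueness of the action relation, the functoriality) are routine consequences of internality and of $\tT_0$ being stably embedded.
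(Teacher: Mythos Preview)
Your approach has a genuine gap in step~(2). The canonical action $m_\oD$ of $\oG$ on $i(\oD)$ is \emph{trivial}: since $\oD$ is a definable set of $\tT_0$, every element of $i(\oD)(\md{M})$ lies in $\md{M}_0$, and $\oG(\md{M})=Aut(\md{M}/\md{M}_0)$ fixes $\md{M}_0$ pointwise. There is no uniqueness principle forcing $m_\oD$ to coincide with the given action $a$; the latter was an \emph{arbitrary} definable $\oG_\dcs{A}$-action, not one manufactured from automorphisms. Consequently your choice $\oX_\oD=i(\oD)$ together with the identity map is an isomorphism of $\oG_\dcs{A}$-actions only when $a$ happens to be trivial.

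The missing idea is that $\oX_\oD$ must be a set that does \emph{not} come from $\tT_0$, so that the canonical $\oG$-action on it is non-trivial and can match $a$. The paper obtains this by twisting with the $0$-definable $\oG$-torsor $\oX$ produced in the proof of Theorem~\ref{mod:internality}: one transports $a$ to an $\dcs{A}$-definable action $c_b:\oG\times\oD\ra\oD$ via the isomorphism $f_b:\oG\ra\oG_\dcs{A}$ (depending on a basepoint $b\in\oX$), and sets $\oX_\oD=\oX\times_\oG\oD$, the quotient of $\oX\times\oD$ by $(gx,d)\sim(x,c_b(g,d))$. The conjugation identity $f_{hb}(hgh^{-1})=f_b(g)$, which holds because $\oG_\dcs{A}$ lives in $\tT_0$ and is hence fixed by $\oG$, shows this equivalence relation is $\oG$-invariant, so $\oX_\oD$ is $0$-definable even though $c_b$ requires the parameter $b$. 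Any $b\in\oX(\dcs{A})$ then gives the required $\dcs{A}$-definable isomorphism $[b,d]\mapsto d$, under which the canonical $\oG$-action on $\oX_\oD$ corresponds to $a$. This twist by the torsor is precisely what your proposal lacks.
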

\begin{proof}
  The proof of Theorem~\ref{mod:internality} produces a definable 
  \(\oG\)-torsor \(\oX\) in \(\th{T}\), which is \(0\)-definable with our 
  assumptions.  There is an \(\dcs{A}\)-definable isomorphism 
  \(f_b:\oG\ra\oG_\dcs{A}\), which, composed with \(a\), defines an 
  \(\dcs{A}\)-definable action \(c_b:\oG\x\oD\ra\oD\). We set 
  \(\oX_\oD=\oX\x_\oG\oD\), i.e., \(\oX_\oD\) is the set of pairs 
  \((x,d)\in\oX\x\oD\), up to the equivalence \((gx,d)=(x,c_b(g,d))\) for 
  \(g\in\oG\). Since \(\oG_\dcs{A}\) is in \(\th{T}_0\), 
  \(f_{hb}(hgh^{-1})=f_b(g)\) for all \(h,g\in{}G\), so the equivalence 
  relation is invariant under the action of \(G\), and hence \(\oX_\oD\) is 
  definable without parameters.
  
  If \(b\in\oX(\dcs{A})\) is any element (known to exist), the map 
  \([b,d]\mt{}d\) is an isomorphism of \(\oG_\dcs{A}\) actions from 
  \(\oX_\oD\) to \(\oD\). The proof for maps is similar.
\end{proof}

\section{Tannakian categories}\label{tensor}\label{tan}
In this section we review, without proofs, the definitions and basic 
properties of Tannakian categories. The proofs, as well as more details 
(originally from~\Cite{Saavedra}) can be found in the first two sections 
of~\Cite{LNM900II}. The section contains no new results (though the 
terminology is slightly different, and follows in part~\Cite[ch.~VII]{CWM}).

Let \(\sch{G}\) be an affine algebraic group (or, more generally, an affine 
groups scheme) over a field \(\kk\). The category \(\Rep_\sch{G}\) of 
finite-dimensional representations of \(\sch{G}\) over \(\kk\), admits, in 
addition to the category structure, a tensor product operation on the 
objects. With this structure, and with the forgetful functor into the 
category of vector spaces over \(\kk\) it satisfies the axioms of a 
(neutralised) Tannakian category (see Example~\ref{tan:repG}). The main 
theorem of~\Cite{Saavedra} (Theorem~\ref{tan:main}) asserts that categories 
satisfying these axioms, summarised below, are in fact precisely of the form 
\(\Rep_\sch{G}\), and that furthermore, the group \(\sch{G}\) can be 
recovered from the (tensor) category structure.

\point[Monoidal categories]
Recall that a \Def{symmetric monoidal category} is given by a tuple 
\((\Cc,\Ten,\phi,\psi)\), where:
\begin{enumerate}
  \item \(\Cc\) is a category
  \item \(\Ten\) is a functor \(\Cc\x\Cc\ra\Cc\), 
    \((\oX,\oY)\mt\oX\Ten\oY\) (in other words, the operation 
    is functorial in each coordinate separately).
  \item \(\phi\) is a collection of functorial isomorphisms
    \begin{equation*}
      \phi_{\oX,\oY,\oZ}:(\oX\Ten\oY)\Ten\oZ\Isom
                                      \oX\Ten(\oY\Ten\oZ)
    \end{equation*}
    one for each triple \(\oX,\oY,\oZ\) of objects of 
    \(\Cc\).  It is called the \Def{associativity constraint}.
  \item \(\psi\) is a collection of functorial isomorphisms 
    \(\psi_{\oX,\oY}:\oX\Ten\oY\Isom\oY\Ten\oX\), 
    called the \Def{commutativity constraint}.
\end{enumerate}
The commutativity constraint is required to satisfy 
\(\psi_{\oX,\oY}\circ\psi_{\oY,\oX}=id_{\oX\Ten\oY}\) 
for all objects \(\oX,\oY\). In addition, \(\phi\) and \(\psi\) are 
required to satisfy certain ``pentagon'' and ``hexagon'' identities, which 
ensure that any two tensor expressions computed from the same set of objects 
are \emph{canonically} isomorphic.

Finally, \(\Cc\) is required have an \Def{identity object}: this is an 
object \(\1\), together with an isomorphism \(u:\1\ra\1\Ten\1\), such that 
\(\oX\mt\1\Ten\oX\) is an equivalence of categories. It follows 
(\Cite[prop.~1.3]{LNM900II}) that \((\1,u)\) is unique up to a unique 
isomorphism with the property that \(u\) can be uniquely extended to 
isomorphisms \(l_\oX:\oX\ra\1\Ten\oX\), commuting with \(\phi\) 
and \(\psi\).

We will usually drop \(\phi\) and \(\psi\) from the notation, and choose a 
particular identity object \(\1\). We will also drop the adjective 
``symmetric''.

\point[Rigidity]\label{tan:rigid}
A monoidal category \((\Cc,\Ten)\) is \Def{closed} if for any two objects 
\(\oX\) and \(\oY\), there is an object \(\Hom(\oX,\oY)\) and a functorial 
isomorphism \(Hom(\oZ\Ten\oX,\oY)\Isom{}Hom(\oZ,\Hom(\oX,\oY))\). In this 
case, the \Def{dual} \(\Co{\oX}\) of \(\oX\) is defined to be 
\(\Hom(\oX,\1)\). The symmetry constraint and the functoriality determine, 
for each object \(\oX\), a map \(\oX\ra\Co{\Co{\oX}}\) and for any four 
objects \(\oX_1\), \(\oX_2\), \(\oY_1\), \(\oY_2\), a map
\begin{equation*}
  \Hom(\oX_1,\oY_1)\Ten\Hom(\oX_2,\oY_2)\ra
  \Hom(\oX_1\Ten \oX_2,\oY_1\Ten \oY_2)
\end{equation*}

\(\Cc\) is said to be \Def{rigid} if it is closed, and all these maps are 
isomorphisms (The latter requirement can be viewed as a formal analogue of 
the objects being finite-dimensional.)

We note that setting \(\oY_1=\oX_2=\1\) in the above map, we get an 
isomorphism \(\Co{\oX}\Ten\oY\ra\Hom(\oX,\oY)\). In 
particular, a map \(f:\oX\ra\oY\) corresponds to a global section 
\(\1\ra\Hom(\oX,\oY)\), hence to a map 
\(\Co{f}:\1\ra\Co{\oX}\Ten\oY\).

\point[The rank of an object]
Let \(f:\oX\ra\oX\) be an endomorphism in a rigid category. As above, 
it corresponds to a morphism \(\Co{f}:\1\ra\Co{\oX}\Ten\oX\).  
Composing with the commutativity followed by the evaluation we get a morphism 
\(\Tr_\oX(f):\1\ra\1\) called the \Def{trace} of \(f\). The trace map 
\(\Tr_\oX:End(X)\ra\kk:=End(\1)\) is multiplicative (with respect to the 
tensor product), and \(\Tr_\1\) is the identity.  The \Def{rank} of an object 
\(\oX\) is defined to be \(\Tr_\oX(id_\oX)\). Thus it is an 
element of \(\kk\).

\point[Tensor categories]\label{tan:gen}
A \Def{tensor category} is a rigid monoidal category \(\Cc\) which is 
abelian, with \(\Ten\) additive in each coordinate. It follows that \(\Ten\) 
is exact in each coordinate, that \(\kk=End(\1)\) is a commutative ring, that 
\(\Cc\) has a natural \(\kk\)-linear structure, where \(\Ten\) is 
\(\kk\)-bilinear, and \(\Tr_\oX\) is \(\kk\)-linear.

Given an object \(\oX\) of a tensor category \(\Cc\), let  
\(\Cc_\oX\) be the full subcategory of \(\Cc\) whose objects are 
isomorphic to sub-quotients of finite sums of tensor powers of \(\oX\) 
(including \(\oX^{\Ten^{-1}}=\Co{\oX}\) and its powers). The object \(\oX\) 
is called a \Def{tensor generator} for \(\Cc\) if any object of \(\Cc\) is 
isomorphic to an object of \(\Cc_\oX\).  For most of what follows we will 
restrict our attention to categories that have a tensor generator. In 
general, \(\Cc\) is a filtered limit of categories of this form.

\point[Tensor functors]\label{tan:functors}
Let \(\Cc\) and \(\cat{D}\) be monoidal categories. A \Def{tensor 
functor} from \(\Cc\) to \(\cat{D}\) is a pair \((\ftr{F},\nat{c})\), 
where \(\ftr{F}:\Cc\ra\cat{D}\) is a functor, and \(\nat{c}\) is a 
collection of isomorphisms 
\(\nat{c}_{\oX,\oY}:\fF(\oX)\Ten\fF(\oY)\Isom\fF(\oX\Ten\oY)\), compatible 
with the constraints, such that \((\fF,\nat{c})\) takes identity objects to 
identity objects.

When \(\Cc\) and \(\Dd\) are abelian, we assume \(\fF\) to be additive. In 
this case, \(\fF\) makes \(\kk_\Dd\) into a \(\kk_\Cc\)-algebra, and \(\fF\) 
is automatically \(\kk_\Cc\)-linear, and in this sense preserves the trace. 
In particular, \(\rk(\oX)=\rk(\ftr{F}(\oX))\).

If \((\ftr{F},\nat{c})\) and \((\ftr{G},\nat{d})\) are two tensor functors 
from \(\Cc\) to \(\cat{D}\), a map from \((\ftr{F},\nat{c})\) to 
\((\ftr{G},\nat{d})\) is a map of functors that commutes with \(\nat{c}\) and 
\(\nat{d}\). We denote by \(Aut(\ftr{F})\) the group of automorphisms of 
\(\ftr{F}\) as a tensor functor.

\point[Tannakian categories]\label{tan:neutral}
A \Def{neutral Tannakian category} over a field \(\kk\) is a tensor category  
\(\Cc\) with \(End(\1)=\kk\), which admits an exact tensor functor \(\w\) 
into the category \(\Vec_\kk\) of finite-dimensional \(\kk\)-vector spaces 
(with the usual tensor structure). Such a functor is called a \Def{fibre 
functor}.  It is automatically faithful, and is said to neutralise \(\Cc\).

Given a fibre functor \(\w\) and a (commutative) \(\kk\)-algebra \(A\), the 
functor \(\w\Ten_\kk{}A\) (with 
\((\w\Ten_\kk{}A)(\oX)=\w(\oX)\Ten_\kk{}A\)) is again an exact tensor 
functor (into a tensor subcategory of the category \(\Vec_A\) of projective 
finitely presented \(A\)-modules). We thus get a functor 
\(A\mt{}Aut(\w\Ten_\kk{}A)\) from the category of \(\kk\)-algebras to groups.  
This functor is denoted by \(\Aut^\Ten_\kk(\w)\).

\begin{example}[representations of a group]\label{tan:repG}
If \(\sch{G}\) is an affine group scheme over \(\kk\), the category 
\(\Cc=\Rep_\sch{G}\) of (finite-dimensional) representations of \(\sch{G}\) 
over \(\kk\), with the usual tensor product (and constraints) forms a 
monoidal category. It is rigid, with \(\Hom(\oX,\oY)\) the space of all 
linear maps between \(\oX\) and \(\oY\), and is abelian, hence it is a tensor 
category.  The trace of a map coincides with the image in \(\kk\) of the 
usual trace. Finally, \(\Cc\) is neutralised by the forgetful functor \(\w\).

If \(A\) is a \(\kk\)-algebra, and \(g\in\sch{G}(A)\), the action of 
\(\sch{G}\) exhibits \(g\) as an automorphism of \(\w\Ten_\kk{}A\).  This 
determines a map (of functors) \(\sch{G}\ra\Aut^\Ten_\kk(\w)\). The main 
theorem states that this map is an isomorphism, and that this example is the 
most general one:
\end{example}

\begin{theorem}[Saavedra]\label{tan:main}
Let \(\Cc\) be a neutral Tannakian category, and let \(\w:\Cc\ra\Vec_\kk\) be 
a fibre functor.
\begin{enumerate}
  \item\label{tan:mainexr}
     The functor \(\Aut^\Ten_\kk(\w)\) is representable by an affine group 
     scheme \(\sch{G}\) over \(\kk\).

  \item\label{tan:equiv}
    The fibre functor \(\w\) factors through a tensor equivalence 
    \(\w:\Cc\ra\Rep_\sch{G}\).

  \item\label{tan:mainrec}
    If \(\Cc=\Rep_\sch{H}\) for some affine group scheme \(\sch{H}\), then 
    the natural map \(\sch{H}\ra\sch{G}\) given in Example~\ref{tan:repG} is 
    an isomorphism.
\end{enumerate}
\end{theorem}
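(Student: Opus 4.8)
The plan is to build, from the Tannakian category $\Cc$ with fibre functor $\w$, an internal cover $\tT_\Cc$ of $ACF_\kk$ (working in characteristic $0$), and then read off the three assertions from Theorems~\ref{mod:internality}, the split exact sequence of~\ref{mod:stronger}, and Proposition~\ref{mod:action} together with the groupoid duality of~\Cite{groupoids}. Fix a tensor generator $\oX$ (reducing to $\Cc_\oX$ and taking a filtered limit at the end), and let $n=\rk(\oX)$. The theory $\tT_0=ACF_\kk$ has one sort, a model being an algebraically closed field $K\supseteq\kk$. I would let the cover $\tT_\Cc$ add, for each object $\oY$ of $\Cc$ (equivalently, each object of $\Cc_\oX$), a sort $S_\oY$ which in a model $K$ is to be interpreted as $\w(\oY)\Ten_\kk K$ — a finite-dimensional $K$-vector space — together with the $\kk$-linear maps $\w(f)\Ten K$ for morphisms $f$ in $\Cc$, the tensor-structure isomorphisms $\nat{c}_{\oY,\oZ}\Ten K$, evaluation/coevaluation for duals, and the addition and $K$-scalar-multiplication making each $S_\oY$ a $K$-vector space with the prescribed linear-algebraic relations among the $S_\oY$'s. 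A model of $\tT_\Cc$ over $K$ is then exactly an exact $\kk$-linear tensor functor $\Cc\ra\Vec_K$ that is ``pointwise split'' in the sense that each value comes equipped with a chosen basis-giving identification — more precisely, once one fixes $\kk$-bases of the $\w(\oY)$ one gets, for every model, a definable bijection $S_\oY(K)\cong K^{\dim\w(\oY)}$, so the new sorts are $ACF_\kk$-internal; and the reduct to $\tT_0$ recovers $K$, so $\pi\circ i=\mathrm{id}$. Checking stable embeddedness of $\tT_0$ in $\tT_\Cc$ — i.e. that $ACF_\kk$ gets no new structure — is exactly the point where characteristic $0$ and the Tannakian axioms (abelian, rigid, $End(\1)=\kk$) are used: this is the analogue of Deligne's theorem and in the neutral case follows from the faithfulness and exactness of $\w$ together with elimination of imaginaries in $ACF$; I expect to invoke it via the fact that $\Cc$ is neutral so that a ``canonical'' model (the one built from $\w$ itself) already lives over $\dcl{\es}$, giving the internality parameters $A$ with $A_0=\dcl{\es}$, as in~\ref{mod:nogpd}.

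Granting that $\tT_\Cc$ is an internal cover of $ACF_\kk$ with $A_0=\dcl{\es}$, Theorem~\ref{mod:internality} produces a pro-group $\oG$ in $\tT_\Cc$ and, by~\ref{mod:nogpd}, a genuine (pro-)definable group $\oG_\Aa$ in $ACF_\kk$ acting on all new sorts; since each $S_\oY$ is finite-dimensional and the whole of $\tT_\Cc$ is generated over $ACF_\kk$ by the finitely many sorts in $\Cc_\oX$ (here the tensor generator and compactness enter), $\oG_\Aa$ is in fact an honest definable group, i.e.\ an affine algebraic $\kk$-group $\sch{G}$ — a closed subgroup of $GL(\w(\oX))$ cut out by the polynomial conditions expressing compatibility with every morphism, the tensor structure and duality. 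For~\eqref{tan:mainexr}, I must identify, for a $\kk$-algebra $A$, the set $\Aut^\Ten_\kk(\w)(A)=\Aut(\w\Ten_\kk A)$ with $\sch{G}(A)$: when $A=K$ is a field this is immediate, because an automorphism of $\w\Ten_\kk K$ as a tensor functor is precisely an automorphism of the model $\md{M}$ of $\tT_\Cc$ built from $\w$ over $K$ fixing $K$, which by Theorem~\ref{mod:internality} is $\oG(\md{M})=\sch{G}(K)$; the general case is the weakening flagged in the introduction, so I only claim agreement on fields (Question~\ref{q:functor}), or else deduce representability from the algebraic-group structure on $\sch{G}$ by the standard descent argument of~\Cite[\S2]{LNM900II}.

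For~\eqref{tan:equiv}, Proposition~\ref{mod:action} does the main work: every definable action of $\oG_\Aa=\sch{G}$ in $ACF_\kk$ — in particular every finite-dimensional $\kk$-representation of $\sch{G}$, viewed via the Tannakian dictionary as a finite-dimensional $\kk$-vector space with a definable $\sch{G}$-action — arises from a definable set $\oX_\oD$ in $\tT_\Cc$, i.e.\ from an object of $\Cc$; conversely each sort $S_\oY$ carries the $\sch{G}$-action restricted from $GL(\w(\oX))$, so $\w$ lands in $\Rep_\sch{G}$, and the faithfulness/exactness built into ``neutral Tannakian'' plus the ``maps'' clause of~\ref{mod:action} gives full faithfulness; essential surjectivity is the representation-recovery just described, and I would only sketch the remaining bookkeeping (compatibility of tensor and duality structures), since it does not differ from~\Cite[\S2]{LNM900II}. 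Finally~\eqref{tan:mainrec}: if $\Cc=\Rep_\sch{H}$, then Example~\ref{tan:repG} gives $\sch{H}\ra\sch{G}$, and on field points this is the comparison between $\sch{H}(K)=Aut$ of the forgetful functor over $K$ and $\oG(\md{M})$ for the corresponding model, which is a bijection by Theorem~\ref{mod:internality}; that a homomorphism of affine algebraic $\kk$-groups bijective on $K$-points for all algebraically closed $K$ is an isomorphism is standard in characteristic $0$ (faithful flatness plus triviality of the kernel), so this step is routine once the model-theoretic identification is in place. The hard part throughout is the stable-embeddedness/no-new-structure step in constructing $\tT_\Cc$; everything after it is a translation of~\ref{mod:internality}, \ref{mod:stronger} and~\ref{mod:action} into the Tannakian language.
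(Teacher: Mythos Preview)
Your overall architecture matches the paper's: build a theory $T_\Cc$ with a field sort $\LL$ and a vector-space sort $V_\oY$ for each object, show it is an internal cover of $ACF_\kk$, and read off the three parts from Theorem~\ref{mod:internality}, Proposition~\ref{mod:stronger}, and Proposition~\ref{mod:action}. Two points deserve correction.

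First, you have misplaced the difficulty. Stable embeddedness of $\LL$ is \emph{not} the hard step and has nothing to do with Deligne's theorem: once each $V_\oY$ is, with parameters, definably a power of $\LL$, the whole theory is interpretable (with parameters) in $ACF$, hence stable, and stable embeddedness of $\LL$ follows for free (this is Proposition~\ref{alg:stable} and the subsequent Claim in the paper). The statement ``$ACF_\kk$ gets no new structure'' that you are gesturing at is a \emph{different} and strictly stronger claim---completeness of $T_\Cc$, or connectedness of the associated groupoid---and it is \emph{not used} in the proof of Theorem~\ref{tan:main}; see Remark~\ref{alg:deligne1}. What you actually need for~\ref{mod:nogpd} is only that the fibre functor $\w$ over $\kk$ gives internality parameters $\dcs{A}$ with $\LL(\dcs{A})=\kk$, and that is proved directly (Proposition~\ref{alg:fibismod}) by lifting field automorphisms of $\bar\kk/\kk$ to the model.

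Second, your argument for~\eqref{tan:mainrec} has a genuine gap. You write ``$\sch{H}(K)=Aut$ of the forgetful functor over $K$ \dots\ which is a bijection by Theorem~\ref{mod:internality}'', but this equality is precisely the surjectivity of $\sch{H}\ra\sch{G}$ that you are supposed to prove; Theorem~\ref{mod:internality} only tells you $\sch{G}(K)=Aut(\md{M}/K)$, not that every such automorphism comes from $\sch{H}$. The paper's actual argument (\ref{alg:mainrec}) uses the Galois correspondence: one shows that any $\dcs{A}$-definable imaginary fixed by $\sch{H}(\md{M})$ is already $0$-definable, and concludes $\sch{H}=\sch{G}$. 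For this one needs elimination of imaginaries, which in $T_\Cc$ holds only after adding projective sorts $P_\oY$ (Proposition~\ref{alg:eqei}); the fixed-point verification then reduces to: an $\sch{H}$-fixed vector in $V_\oY$ is a morphism $\1\ra\oY$ in $\Rep_\sch{H}$, hence a function symbol, and an $\sch{H}$-fixed point of $P_\oY$ is a one-dimensional subrepresentation, hence a predicate. Your plan omits both the projective sorts and this Galois-theoretic step, so as written it does not establish~\eqref{tan:mainrec}.
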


In the following section we present a proof of a slightly weaker statement 
(as explained in the introduction), using the model theoretic tools of 
Section~\ref{mod}.

\section{The theory associated with a tensor category}\label{alg}
In this section, we associate a theory with any Tannakian category, and use 
Theorem~\ref{mod:internality} to prove the main Theorem~\ref{tan:main}, in 
the case that the base field has characteristic \(0\) (and with the other 
caveats mentioned in the introduction).

\point\label{alg:theory}
Let \((\Cc,\Ten,\phi,\psi)\) be a tensor category over a field \(\kk\) of 
characteristic \(0\), such that the rank of every object is a natural number.  
Let \(\dcs{K}\) be an extension field of \(\kk\).  The theories \(T_\Cc\) 
and \(\ti{T_\Cc}\) (depending on \(\dcs{K}\), which is omitted from the 
notation) are defined as follows:
\subpoint
\(T_\Cc\) has a sort \(\LL\), as well as a sort \(V_\oX\) for every 
object \(\oX\) of \(\Cc\), and a function symbol 
\(v_f:V_\oX\ra{}V_\oY\) for any morphism \(f:\oX\ra\oY\).  
There are a binary operation symbol \(+_\oX\) on each \(V_\oX\), and 
a function symbol \(\cdot_\oX:\LL\x{}V_\oX\ra{}V_\oX\) for each object 
\(\oX\). The sort \(\LL\) contains a constant symbol for each element of 
\(\dcs{K}\).

\subpoint
The theory says that \(\LL\) is an algebraically closed field, whose field 
operations are given by \(+_\1\) and \(\cdot_\1\). The restriction of the 
operation to the constant symbols is given by the field structure on \(K\).

It also says that \(+_\oX\) and \(\cdot_\oX\) determine a vector 
space structure over \(\LL\) on every \(V_\oX\), and \(V_\oX\) has 
dimension \(\rk(\oX)\) over \(\LL\).  Each \(v_f\) is an \(\LL\)-linear 
map.

\subpoint
The \(\kk\)-linear category structure is reflected in the theory: 
\(v_{id_\oX}\) is the identity map for each \(\oX\), \(v_f=0\) if and 
only if \(f=0\), \(v_{f\circ{}g}=v_f\circ{}v_g\), if \(a\in\kk\) is viewed as 
an element of \(End(\oX)\), then \(v_a\) is multiplication by 
\(a\in\dcs{K}\).  Also, \(v_{f+g}=v_f+v_g\), and \(v_f\) is injective or 
surjective if and only if \(f\) is.

\subpoint
For any two objects \(\oX\) and \(\oY\) there is a further function 
symbol 
\(b_{\oX,\oY}:V_\oX\x{}V_\oY\ra{}V_{\oX\Ten\oY}\).  
The theory says that \(b_{\oX,\oY}\) is bilinear, and the induced map 
\(V_\oX\Ten_\LL{}V_\oY\ra{}V_{\oX\Ten\oY}\) is an 
isomorphism. Here \(\Ten_\LL\) is the usual tensor product of vector spaces 
over \(\LL\) (in a given model). This statement is first order, since the 
spaces are finite-dimensional.

The theory also says that 
\(v_{\psi_{\oX,\oY}}(b_{\oX,\oY}(x,y))=b_{\oY,\oX}(y,x)\), and similarly for 
\(\phi\).

If \(u\in{}V_\oX\) and \(v\in{}V_\oY\) (either terms or elements in 
some model), we write \(u\Ten{}v\) for \(b_{\oX,\oY}(u,v)\).

\subpoint
The theory \(\ti{T_\Cc}\) is the expansion of \(T_\Cc\) by an extra 
sort \(P_\oX\), for every object \(\oX\), together with a surjective 
map \(\pi_\oX:V_\oX\ra{}P_\oX\) identifying \(P_\oX\) with 
the projective space associated with \(V_\oX\). It also includes function 
symbols \(p_f:P_\oX\ra{}P_\oY\) and 
\(d_{\oX,\oY}:P_\oX\x{}P_\oY\ra{}P_{\oX\Ten\oY}\) for 
any objects \(\oX,\oY\) and morphism \(f\) of \(\Cc\), and the 
theory says that they are the projectivisations of the corresponding maps 
\(v_f\) and \(b_{\oX,\oY}\).

\begin{prop}\label{alg:stable}
  The theory \(T_\Cc\) is stable. In particular, \(\LL\) is stably embedded.  
  In each model, \(\LL\) is a pure algebraically closed field (possibly with 
  additional constants).
\end{prop}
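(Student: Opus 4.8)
The plan is to show that $T_\Cc$ is interpretable in the theory $ACF_{\dcs K}$ of algebraically closed fields containing $\dcs K$, which is $\w$-stable, hence stable; stability is preserved under interpretation, so $T_\Cc$ will be stable. The point is that a model $\md M$ of $T_\Cc$ consists of an algebraically closed field $\LL(\md M)$ together with, for each object $\oX$, a $\LL(\md M)$-vector space $V_\oX(\md M)$ of dimension $\rk(\oX)$, plus linear and bilinear maps $v_f$, $b_{\oX,\oY}$ satisfying functoriality and the tensor-compatibilities — and all of this data is rigidly determined, up to the choice of bases, by the field $\LL(\md M)$ alone. Concretely, first I would fix once and for all, in $\Cc$, a tensor generator is not available in general, so instead I would use that $\Cc$ is (by~\ref{tan:gen}) a filtered union of the subcategories $\Cc_\oX$; it suffices to interpret each $T_{\Cc_\oX}$ uniformly, and by stable embeddedness of $\LL$ (once established) and compactness these glue. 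Within a single $\Cc_\oX$ of rank $n$, every object $\oY$ is a subquotient of a finite sum of tensor powers $\oX^{\Ten k}\Ten\Co\oX^{\Ten\ell}$, so $V_\oY$ is a subquotient of $(\LL^n)^{\Ten k}\Ten((\LL^n)^\vee)^{\Ten\ell}$, and the inclusion/projection data realizing $\oY$ inside such a tensor power is, after choosing a basis of $\oX$, given by a tuple of scalars from $\dcs K$ (the structure constants of $\Cc$). Thus the interpretation sends $\LL\mapsto$ the field, $V_\oX\mapsto$ affine $n$-space, $v_f$ and $b_{\oX,\oY}\mapsto$ the linear/bilinear maps with those fixed structure constants, and $V_\oY$ for general $\oY$ to the appropriate definable subquotient. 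One must check this is well-defined independently of the presentation of $\oY$ — but different presentations differ by an isomorphism in $\Cc$, which the theory forces $v_{(\cdot)}$ to respect, so the resulting definable sets are canonically identified.

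Having exhibited such an interpretation into $ACF_{\dcs K}$, stability of $T_\Cc$ is immediate. Stable embeddedness of $\LL$ then follows from a general fact: in a stable theory a sort is stably embedded, and more to the point here $\LL$ is interpreted as (a copy of) the whole model of $ACF_{\dcs K}$, every other sort being built over it, so any subset of $\LL^m$ definable in $T_\Cc$ with parameters is in particular definable in the reduct $ACF_{\dcs K}$ with those parameters replaced by their images; and in $ACF$ every such set is already definable over parameters from the field (quantifier elimination). This gives the ``pure algebraically closed field'' clause: the only structure on $\LL$ induced from $T_\Cc$ is that of a field over $\dcs K$, i.e. the constants for $\dcs K$ and nothing more. (The parenthetical ``possibly with additional constants'' is exactly the $\dcs K$-structure.)

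The main obstacle is the bookkeeping needed to make the interpretation genuinely uniform across all of $\Cc$ when $\Cc$ has no tensor generator — one needs to organize the structure constants of $\Cc$ coherently over the filtered system $\{\Cc_\oX\}$ so that the interpreted sets for an object $\oY\in\Cc_\oX\cap\Cc_{\oX'}$ agree, and so that an infinite interpretation (with infinitely many sorts) is still a legitimate interpretation in the multi-sorted sense of~\ref{mod:imag}. This is where I would spend most care: verifying that, since all transition isomorphisms in $\Cc$ are themselves data encoded by the function symbols $v_f$, the axioms of $T_\Cc$ force the interpreted copies to be identified by $\dcs K$-definable isomorphisms, so no incoherence arises. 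The finite-dimensionality built into the axioms (each $V_\oX$ of fixed finite dimension, the tensor-product condition being first order because the spaces are finite-dimensional) is what makes all of this expressible; once that is granted, everything reduces to linear algebra over a pure algebraically closed field, and stability is inherited for free.
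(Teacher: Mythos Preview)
Your proposal reaches the right conclusion, but it takes a harder route than the paper and contains a gap in the parameter-free interpretation you attempt.

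The paper's proof is three lines: in any given model, choose a basis for each $V_\oX$; this identifies $V_\oX$ with $\LL^{\rk(\oX)}$, and all the maps $v_f$, $b_{\oX,\oY}$ then become definable in the pure field structure on $\LL$ \emph{with those basis elements as parameters}. Hence the whole model is interpretable, with parameters, in the algebraically closed field $\LL$. Since $ACF$ is stable and stability is unaffected by naming constants, $T_\Cc$ is stable; stable embeddedness and purity of $\LL$ follow at once.

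Your attempt to build a \emph{parameter-free} interpretation in $ACF_{\dcs K}$ stumbles on the claim that, ``after choosing a basis of $\oX$'', the structure constants lie in $\dcs K$. An object $\oX$ of the abstract $\kk$-linear category $\Cc$ has no basis; only $V_\oX(\md M)$ in a model has one, and the resulting matrix entries for $v_f$ then lie in $\LL(\md M)$, not a priori in $\dcs K$. (A fibre functor would supply $\kk$-rational bases, but none is assumed at this point in the paper --- indeed, the proposition is stated and proved before fibre functors enter.) Different models of $T_\Cc$ can realise the $v_f$ by genuinely different matrices, so there is no canonical $\dcs K$-valued choice to write down.

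Once you allow parameters from the model, all of your bookkeeping about tensor generators, filtered systems, and coherent gluing evaporates: any choice of bases works, and there is nothing to reconcile across presentations. The moral is that the content of your argument is correct, but insisting on a parameter-free interpretation both introduces an unjustified step and creates exactly the complications the paper's approach sidesteps.
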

\begin{proof}
  A choice of basis for each vector space identifies it with a power of 
  \(\LL\). All linear and bilinear maps are definable in the pure field 
  structure on \(\LL\). Since \(\LL\) is stable and stability is not affected 
  by parameters, so is \(T_\Cc\).
\end{proof}

\begin{prop}\label{alg:eqei}
  The theories \(T_\Cc\) and \(\ti{T}_\Cc\) eliminate quantifiers (possibly 
  after naming some constants). The theory \(\ti{T}_\Cc\) eliminates 
  imaginaries.
\end{prop}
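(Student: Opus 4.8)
The plan is to establish quantifier elimination for $T_\Cc$ first, and then bootstrap the remaining statements from it. The key observation is the one already used in Proposition~\ref{alg:stable}: in any model, a choice of $\LL$-basis for each $V_\oX$ identifies the entire structure, apart from the pure field $\LL$, with a tuple of linear-algebra data over $\LL$ — the maps $v_f$ become matrices, the $b_{\oX,\oY}$ become the structure constants of the tensor-product isomorphisms, and these matrices are not free but are pinned down (up to the $GL$-action coming from change of basis) by the equations of the theory. So the plan is: fix a large saturated model, consider two tuples $\bar a$, $\bar b$ (from finitely many sorts) with the same quantifier-free type over a set $C$, and produce an automorphism of the monster over $C$ taking $\bar a$ to $\bar b$. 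After naming constants for a basis of the relevant $V_\oX$'s occurring in $\bar a,\bar b$ (this is the ``possibly after naming some constants'' clause), every element of $\bar a$ is interdefinable over those constants with a tuple from $\LL$; its quantifier-free type then reduces to the quantifier-free type of that $\LL$-tuple in the pure field $\LL$, which, since $ACF$ has quantifier elimination, determines its type. One then checks that an automorphism of $\LL$ realizing the right field-type lifts canonically to an automorphism of the whole $T_\Cc$-structure, because all the other sorts are built functorially from $\LL$ via the named basis. This gives quantifier elimination for $T_\Cc$.

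For $\ti T_\Cc$, the extra sorts $P_\oX$ are projectivizations $\pi_\oX\colon V_\oX\to P_\oX$, and the new function symbols $p_f, d_{\oX,\oY}$ are the projectivizations of $v_f, b_{\oX,\oY}$. The point is that $P_\oX$ is a definable quotient of $V_\oX$ by the definable equivalence relation of $\LL$-proportionality, and conversely $V_\oX$ is interpreted (as a $T_\Cc$-structure) inside $\ti T_\Cc$ — so $\ti T_\Cc$ is, up to bi-interpretability, just $T_\Cc$ with the projective-space imaginaries adjoined as genuine sorts. Quantifier elimination for $\ti T_\Cc$ then follows from that of $T_\Cc$ by the same basis-naming argument: once a basis of each relevant $V_\oX$ is named, a point of $P_\oX$ is coded by the corresponding point of $\PP^{n-1}$ over $\LL$, and one uses that $ACF$ eliminates quantifiers in the sorts $\PP^{n}$ as well (these are definable quotients, and $ACF$ eliminates imaginaries). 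Elimination of imaginaries for $\ti T_\Cc$ is then the statement that every definable equivalence relation has a definable quotient: given the above interpretation, any imaginary of $\ti T_\Cc$ is coded by an imaginary of $ACF$ together with some finite combinatorial data indexing which sorts are involved, and since $ACF$ eliminates imaginaries and the projective-space sorts have been added precisely to absorb the one non-eliminable family of imaginaries of a pure vector-space structure (namely lines through the origin), every imaginary becomes a real element of some sort of $\ti T_\Cc$.

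The main obstacle I expect is the ``naming constants'' bookkeeping and verifying that it really suffices — i.e., checking carefully that the quantifier-free type of a finite tuple, \emph{after} adjoining a basis of each $V_\oX$ mentioned, genuinely controls the full type, uniformly enough to conclude quantifier elimination of the (unnamed) theory up to naming constants. Concretely one must ensure that the bilinear data $b_{\oX,\oY}$ and the functoriality relations $v_{f\circ g}=v_f\circ v_g$ do not impose hidden algebraic constraints relating different sorts that are invisible at the quantifier-free level; but these relations are themselves quantifier-free (they are equations among the matrix entries, which are $\LL$-polynomial identities), so a quantifier-free type already records them, and they are automatically satisfied because the tuples come from a model. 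The other place requiring care is pinning down exactly which constants must be named — a basis for $\1$ (equivalently, this is already available since $V_\1\cong\LL$) plus a basis for a tensor generator, if one exists, suffices to handle everything by the subquotient description in \ref{tan:gen}; in the general (non-finitely-generated) case one names a basis for each object appearing, which is still only finitely many per formula, so quantifier elimination in the sense of each individual formula being equivalent to a quantifier-free one over suitable constants goes through.
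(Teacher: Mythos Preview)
Your quantifier-elimination argument is correct and is a legitimate alternative to the paper's. The paper argues more geometrically: every definable set is (after parameters) Zariski-constructible, and a Zariski-closed subset of \(V_\oX\) is the zero locus of finitely many polynomials; since a polynomial is an element of the symmetric algebra on \(\Co{V_\oX}\), and \(\Co{V_\oX}\) is canonically \(V_{\Co\oX}\), the evaluation pairing is literally built out of the function symbols \(b_{-,-}\) and \(v_f\), hence quantifier-free. Your back-and-forth/automorphism argument reaches the same conclusion, and your remarks about which constants need naming are correct. The paper's route has the side benefit of producing the explicit quantifier-free formulas, which is exactly what is reused for elimination of imaginaries.

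Your argument for elimination of imaginaries, however, has a real gap. The bi-interpretation with \(ACF\) you invoke is only available \emph{after} naming a basis, so what it gives you is a code for each imaginary over that basis; you still owe an argument that there is a code over \(\emptyset\). Your sentence ``the projective-space sorts have been added precisely to absorb the one non-eliminable family of imaginaries of a pure vector-space structure (namely lines through the origin)'' is where the slippage occurs: the imaginaries of \(T_\Cc\) are not just lines in a single \(V_\oX\), but arbitrary Zariski-closed subsets of products of the \(V_\oX\), and the canonical code for such a set is the linear span of its defining equations --- a point of a Grassmannian of some \(V_{\operatorname{Sym}^{\le n}\Co\oX}\), hence (via Pl\"ucker) a point of \(P_\oY\) for \(\oY\) an exterior power of a symmetric power of a dual. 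The reason this works is that \(\Cc\), being a rigid abelian tensor category, contains all of those derived objects \(\oY\), so the required \(P_\oY\) is already a sort of \(\ti T_\Cc\). That is the step the paper isolates (citing \Cite[prop.~4.2]{groupoids} and reproving it in Proposition~\ref{dif:ei}): reduce by Noetherian induction to closed sets, observe that the polynomial algebra on \(V_\oX\) is the ind-object \(V_{\operatorname{Sym}\Co\oX}\), and read off the code in the appropriate projective sort. Your sketch gestures at the right target but never identifies which \(P_\oY\) receives the code, and without that identification the argument does not close.
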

\begin{proof}
  That \(\ti{T}_\Cc\) eliminates imaginaries is precisely the statement 
  of~\Cite[prop.~4.2]{groupoids} (see also the proof of 
  Proposition~\ref{dif:ei}).  The proof of quantifier elimination is similar: 
  since all sorts are interpretable (with parameters) in \(ACF\), all 
  definable sets are boolean combinations of Zariski-closed subsets. A 
  Zariski-closed subset of \(V_\oX\) or \(P_\oX\) is the set of zeroes of 
  polynomials, which are elements of the symmetric algebra on \(\Co{V_\oX}\).  
  But \(\Co{V_\oX}\) is identified with \(V_{\Co{\oX}}\), and the tensor 
  algebra on it, as well as the action on \(V_\oX\) or \(P_\oX\) is definable 
  without quantifiers.
\end{proof}

\point[Models and fibre functors]\label{alg:modfib}
Models of \(T_\Cc\) are essentially fibre functors over algebraically 
closed fields. More precisely, we have the following statements.

\begin{props}\label{alg:modisfib}
  Let \(\rM\) be a model of \(T_\Cc\), and let \(\dcs{A}\) be a subset of 
  \(\rM\) such that for any object \(\oX\), \(V_\oX(\dcs{A})\) contains a 
  basis of \(V_\oX(\md{M})\) over \(\LL(\md{M})\). Then 
  \(\oX\mt{}V_\oX(\dcs{A})\) determines a fibre functor over the field 
  extension \(\LL(\dcs{A})\) of \(\dcs{K}\).
\end{props}
\begin{proof}
  We first note that since (by definition) \(\LL(\dcs{A})\) is definably 
  closed, it is indeed a field. Likewise, since each \(V_\oX(\dcs{A})\) 
  is definably closed, it is a vector space over \(\LL(\dcs{A})\), and the 
  \(V_\oX(\dcs{A})\) are closed under the linear maps that come from 
  \(\Cc\). Thus \(\oX\mt{}V_\oX(\dcs{A})\) defines a 
  \(\kk\)-linear functor from \(\Cc\) to the category of vector spaces 
  over \(\LL(\dcs{A})\).
  
  The map \(b_{\oX,\oY}\) determines a map \(c_{\oX,\oY}:
  V_\oX(\dcs{A})\Ten_{\LL(\dcs{A})}V_\oY(\dcs{A})\ra
  V_{\oX\Ten\oY}(\dcs{A})\).  Since the map 
  \(V_\oX(\rA)\Ten_{\LL(\rA)}V_\oY(\rA)\ra{}V_\oX(\rM)\Ten_{\LL(\rM)}V_\oY(\rM)\) 
  is injective, so is \(c_{\oX,\oY}\), and so the whole statement 
  will follow from knowing that the dimension of \(V_\oX(\rA)\) over 
  \(\LL(\rA)\) is \(\rk(\oX)\). Since \(V_\oX(\rA)\) contains a basis 
  over \(\LL(\rM)\), we know that it is at least \(\rk(\oX)\).

  For any \(a_1,\dots,a_n\in{}V_\oX(\rA)\), the set of all 
  \((x_1,\dots,x_n)\in\LL^n\) such that \(\sum{}x_ia_i=0\) is an 
  \(\rA\)-definable subspace of \(\LL^n\), Since \(\LL\) is stably embedded, 
  it is \(\LL(\rA)\) definable.  Hence it has an \(\LL(\rA)\)-definable basis 
  (\Cite[Lemma~4.10]{etale}). This shows that the dimension is correct.
\end{proof}

\begin{props}
  Let \(\rA\) be a subset of a model \(\rM\) of \(T_\Cc\) such that 
  \(\dcl{\rA\cup\LL(\rM)}=\rM\), and such that \(\LL(\rA)\) is algebraically 
  closed.  Then \(\dcl{\rA}\) is a model. In particular, 
  \(\oX\mt{}V_\oX(\rA)\) determines a fibre functor for \(\Cc\) over 
  \(\LL(\rA)\).
\end{props}
\begin{proof}
  We only need to prove that each \(V_\oX(\rA)\) contains a basis over 
  \(\LL(\rM)\).  By assumption, there is an \(\rA\)-definable map \(f\) from 
  a subset \(\oU\) of \(\LL^m\) to \(V_\oX^n\), where 
  \(n=\rk(\oX)\), such that \(f(u)\) is a basis of \(V_\oX(\rM)\) for 
  all \(u\in\oU(\rM)\). Since \(\oU\) is \(\LL(\rA)\)-definable, 
  and \(\LL(\rA)\) is algebraically closed, \(\oU(\rA)\) contains a point 
  \(u\), and \(f(u)\) is thus a basis in \(V_\oX(\rA)\).
\end{proof}

\begin{props}\label{alg:fibismod}
  Conversely, if \(\dcs{K}_1\) is a field extension of \(\dcs{K}\), 
  \(\w:\Cc\ra\Vec_{\dcs{K}_1}\) is a fibre functor, and \(\bar{\dcs{K}}\) is 
  an algebraically closed field containing \(\dcs{K}_1\), the assignment 
  \(V_\oX\mt\w(\oX)\Ten_{\dcs{K}_1}\bar{\dcs{K}}\) determines a model 
  \(\rM_\w\) of \(T_\Cc\), such that \(\rA=\w\subseteq\rM\) satisfies 
  \(\dcl{\rA\cup\LL(\rM_\w)}=\rM_\w\), and \(\LL(\rA)=\dcs{K}_1\).

  In particular, \(T_\Cc\) is consistent if and only if \(\Cc\) is has a 
  fibre functor.
\end{props}
\begin{proof}
  Since \(\bar{\w}=\w\Ten_{\dcs{K}_1}\bar{\dcs{K}}\) is itself a fibre 
  functor, it is clear that \(\rM_\w\) is a model (\(b_{\oX,\oY}\) is defined 
  to be the composition of the canonical pairing 
  \(\bar{\w}(\oX)\x\bar{\w}(\oY)\ra\bar{\w}(\oX)\Ten_{\bar{\dcs{K}}}\bar{\w}(\oY)\) 
  with the structure map 
  \(c_{\oX,\oY}:\bar{\w}(\oX)\Ten\bar\w(\oY)\ra\bar\w(\oX\Ten\oY)\)).  Also, 
  it is clear that \(\dcl{\rA\cup\LL(\rM_\w)}=\rM_\w\). To prove that 
  \(\LL(\rA)=\dcs{K}_1\), we note that any automorphism of \(\bar{\dcs{K}}\) 
  over \(\dcs{K}_1\) extends to an automorphism of the model by acting the 
  second term.
\end{proof}

\point
From now on we assume that \(\Cc\) has a fibre functor \(\w\), and denote  
the theory of the model \(\rM_\w\) constructed in 
Proposition~\ref{alg:fibismod} by \(T_\w\) (so \(T_\w\) is a completion of 
\(T_\Cc\)).

We also assume that the category \(\Cc\) has a tensor generator 
(\ref{tan:gen}).  The general results are obtained as a limit of this case, 
in the usual way.  (This is not really necessary, since, as mentioned 
in~\ref{mod:cover}, the theory works in general, but it makes it easier to 
apply standard model theoretic results about definable, rather than 
pro-definable set.) We fix a tensor generator \(\oQ\).

\begin{claim}
  The theory \(T_\Cc\) is an internal cover of \(\LL\), viewed as an 
  interpretation of \(ACF_\dcs{K}\).
\end{claim}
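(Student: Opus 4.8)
The plan is to check the two clauses of the definition of an internal cover directly: first that $\LL$, carrying the structure of $ACF_\dcs{K}$, is a stably embedded interpretation inside $T_\Cc$, and second that there is a stably embedded interpretation of $T_\Cc$ back in $ACF_\dcs{K}$ whose composite with the first is the identity. The first clause is immediate from Proposition~\ref{alg:stable}: since $T_\Cc$ is stable, $\LL$ is stably embedded, and the structure induced on $\LL$ is that of a pure algebraically closed field with constants; after naming finitely many further constants if necessary (cf.\ Proposition~\ref{alg:eqei}) this is exactly $ACF_\dcs{K}$. So the sort $\LL$, together with its designated constants, gives a stably embedded interpretation $i$ of $ACF_\dcs{K}$ in $T_\Cc$.

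For the second clause I would use the fixed tensor generator $\oQ$. Let $n=\rk(\oQ)\in\mathbb N$, and in a model $\rM$ choose a basis $\bar e$ of $V_\oQ(\rM)$ over $\LL(\rM)$; since $\dim V_\oQ=n$ is finite, ``a basis of $V_\oQ$'' is an honest definable set, so $\bar e$ is a finite tuple. The crucial point is that $\dcl{\LL(\rM)\cup\bar e}=\rM$, i.e.\ $\bar e$ is a finite set of internality parameters. Indeed, via the $\emptyset$-definable maps $b_{\oX,\oY}$ and the evaluation morphisms (which give dual bases), $\bar e$ induces a basis of $V_W(\rM)$ for every $W$ that is a finite sum of tensor powers of $\oQ$ and $\Co\oQ$. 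Given an arbitrary object $\oX$ of $\Cc$, since $\oQ$ is a tensor generator we may fix $Z\subseteq Y\subseteq W$ with $\oX\cong Y/Z$ and $W$ of the above type; the morphisms $Z\hookrightarrow Y\hookrightarrow W$ yield, in the language of $T_\Cc$, $\emptyset$-definable $\LL$-linear maps, and hence their images are $\emptyset$-definable subspaces of $V_W(\rM)$. By stable embeddedness of $\LL$ (Proposition~\ref{alg:stable}) these subspaces, expressed in the $\bar e$-induced coordinates, are $\LL(\rM)$-definable, so they have bases in $\dcl{\LL(\rM)\cup\bar e}$; transporting along the $\LL(\rM)\cup\bar e$-definable identification $V_\oX(\rM)\cong Y/Z$ produces a basis of $V_\oX(\rM)$ in $\dcl{\LL(\rM)\cup\bar e}$, and since $\rk(\oX)$ is finite this forces $V_\oX(\rM)\subseteq\dcl{\LL(\rM)\cup\bar e}$. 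This simultaneously exhibits $\rM$, definably over $\LL(\rM)\cup\bar e$, as built from the algebraically closed field $\LL(\rM)$: each $V_\oX$ becomes an explicit subquotient of a power of $\LL$, and each structure map $v_f$, $b_{\oX,\oY}$, $\phi$, $\psi$ is given by a matrix over $\LL(\rM)$. Reading this recipe over an arbitrary algebraically closed field in place of $\LL(\rM)$ is precisely the construction of Proposition~\ref{alg:fibismod} applied to (a base change of) $\w$, so it defines a stably embedded interpretation $\pi$ of $T_\Cc$ in $ACF_\dcs{K}$ — strictly speaking in $(ACF_\dcs{K})_{A_0}$, where $A_0=\dcl{\bar e}\cap\LL$ is the field of rationality of the above matrices, the naming of these constants being already sanctioned by the conventions of Section~\ref{mod}. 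By construction $\pi\circ i$ is the identity, and its stable embeddedness is again the stable embeddedness of $\LL$ in the reconstructed theory, which holds by Proposition~\ref{alg:eqei}.

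The only step that needs genuine care is the identity $\dcl{\LL(\rM)\cup\bar e}=\rM$: that a single finite tuple $\bar e$, rather than a separate basis for each of the infinitely many sorts $V_\oX$, already determines the whole structure. This is exactly where the tensor generator enters, together with the observation that the subspaces carved out by morphisms of $\Cc$ are $\emptyset$-definable and hence, by stable embeddedness of $\LL$, $\LL(\rM)$-definable. The remaining bookkeeping — tracking whether the reconstruction is defined over $\dcs{K}$ itself or over the possibly larger rationality field $A_0$, and checking that the recipe yields a model of $T_\Cc$ over every algebraically closed extension — is routine given Proposition~\ref{alg:fibismod} and the standing convention that the internality parameters may be named.
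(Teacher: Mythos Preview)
Your proof is correct and follows essentially the same approach as the paper: stable embeddedness of \(\LL\) from stability (Proposition~\ref{alg:stable}), a basis \(\bar e\) for \(V_\oQ\) as internality parameters, and propagation of bases to all \(V_\oX\) via the tensor operations together with stable embeddedness for sub-objects. The paper's proof is terser and stops at showing \(\dcl{\LL(\rM)\cup\bar e}=\rM\), treating this as equivalent to the existence of the reverse interpretation \(\pi\); your more explicit construction of \(\pi\) and the discussion of the rationality field \(A_0\) are useful elaborations but do not change the argument.
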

\begin{proof}
  \(\LL\) is stably embedded in \(T_\Cc\) since \(T_\Cc\) is stable.  
  Given a model \(\rM\) of \(T_\Cc\), any element of \(V_\oX(\rM)\) 
  is a linear combination of elements of a basis \(a\) for \(V_\oX(\rM)\) 
  over \(\LL(\rM)\), hence is definable over \(a\) and \(\LL(\rM)\). We note 
  that if \(Q\) is a tensor generator, then a basis for \(V_Q\) determines a 
  basis for any other \(V_\oX\) (This is clear for tensor products, 
  direct sums, duals and quotients. For a sub-object \(\oY\ra\oX\), 
  the set of all tuples \(x\) in \(\LL\) such that 
  \(\sum{}x_ia_i\in{}V_\oY\) is \(a\)-definable, hence has a point in 
  \(\LL(a)\).)
\end{proof}

\point[Proof of~\ref{tan:main}\eqref{tan:mainexr}]\label{alg:mainexr}
Let \(\oG\) be the definable group in \(T_\Cc\) corresponding to it as an as 
an internal cover of \(\LL\) (Theorem~\ref{mod:internality}).  By 
Proposition~\ref{alg:fibismod}, \(\w\)  determines a definably closed subset 
\(\rA\) of an arbitrarily large model \(\rM\), such that 
\(\dcl{\rA\cup\LL(\rM)}=\rM\) and \(\LL(\rA)=\kk\).  Given a field extension 
\(\dcs{K}\), we may assume that \(\LL(\rM)\) contains \(\dcs{K}\).

For each object \(\oX\), 
\(V_\oX(\rA\cup\dcs{K})=\w(\oX)\Ten_\kk\dcs{K}\) (again by 
Proposition~\ref{alg:fibismod}), It follows by Proposition~\ref{mod:stronger} 
that for such a field \(\dcs{K}\), 
\(\oG(\dcl{\rA\cup\dcs{K}})=Aut(\dcl{\rA\cup\dcs{K}}/\dcs{K})\), and since by 
the definition of the theory, such automorphisms are the same as 
automorphisms of \(\w\Ten_\kk\dcs{K}\) as a tensor functor, we get that 
\(\oG(\dcl{\rA\cup\dcs{K}})=\Aut^\Ten(\w)(\dcs{K})\).

On the other hand, \(\rA\) satisfies Assumption~\ref{mod:nogpd}, hence we 
get a definable group \(\oG_\rA\) in \(\LL\) (i.e., in \(ACF\)), such 
that \(\oG_\rA(\dcs{K})=\oG(\dcl{\rA\cup\dcs{K}})\).  
By~\Cite[4.5]{acfgroups}, any such group is algebraic.\qed

\begin{remark}
  By inspecting the proof of Proposition~\ref{mod:stronger} and the 
  construction of \(\oG\) in this particular case, it is easy to extend the 
  above proof to the case when \(\dcs{K}\) is an integral domain. However, I 
  don't know how to use the same argument for more general \(\dcs{K}\).
\end{remark}

\point[Proof of~\ref{tan:main}\eqref{tan:mainrec}]\label{alg:mainrec}
Let \(\Cc=\Rep_\sch{H}\), where \(\sch{H}\) is an algebraic group over 
\(\kk\), which we view as a definable group \(\oH\) in \(ACF_\kk\).  Let 
\(\oG\) be the definable internality group in \(T_\w\) and \(\rA\) the 
definable subset corresponding to the (forgetful) fibre functor \(\w\), all 
as in~\ref{alg:mainexr}. We obtain a definable group \(\oG_\rA\), and 
the action of \(\sch{H}\) on its representations determines a map of group 
functors from \(\oH\) to \(\oG_\rA\), which by Beth definability is 
definable and therefore algebraic. This homomorphism is injective (on points 
in any field extension) since \(\oH\) has a faithful representation. We 
identify \(\oH\) with its image in \(\oG_\rA\).

By the Galois correspondence (Theorem~\ref{mod:internality}), to prove that 
\(\oH=\oG_\rA\) it is enough to show that any \(\rA\)-definable element in 
\(T_\Cc^{eq}\) fixed by \(\oH(\rM)\) is also fixed by \(\oG_\rA(\rM)\), i.e., 
is \(0\)-definable.  By Proposition~\ref{alg:eqei}, 
\(T_\Cc^{eq}=\ti{T}_\Cc\).  If \(v\in{}V_\oX(\rA)\) is fixed by \(\oH\), then 
the map \(\LL\ra{}V_\oX\) given by \(1\mt{}v\) is a map of \(\oH\) 
representations, hence is given by a function symbol \(f\), so \(v=f(1)\) is 
\(0\)-definable.  Likewise, if \(p\in{}P_\oX(\rA)\) is fixed by \(\oH\), then 
it corresponds to a sub-representation \(l_p\subset{}V_\oX\) (over \(\kk\)), 
hence is given by a predicate, so again \(p\) is \(0\)-definable.

This proves that the map from \(\oH\) to \(\oG_\rA\) is bijective on 
points in any field. Any such algebraic map (in characteristic \(0\)) is an 
isomorphism (\Cite[11.4]{Waterhouse}).\qed

\point[Proof of~\ref{tan:main}\eqref{tan:equiv} (sketch)]\label{alg:equiv}
Let \(T_\Cc\), \(\oG\), \(\rA\) and \(\oG_\rA\) be as above, and 
let \(\Vec_\LL\) be the category of definable \(\LL\)-vector spaces 
interpretable in \(T_\Cc\), with definable linear maps between them as 
morphisms. This is clearly a tensor category, and \(\oX\mt{}V_\oX\) 
is an exact faithful \(\kk\)-linear tensor functor from \(\Cc\) to 
\(\Vec_\LL\).

On the other hand, a representation of \(\oG_\rA\) is given by some definable 
action in \(ACF\), and by Proposition~\ref{mod:action}, this action comes 
from some action of \(\oG\) on a definable set in \(T_\Cc\), which must 
therefore be a vector space over \(\LL\). Conversely, each linear action of 
\(\oG\) on an object in \(\Vec_\LL\) gives a representation of \(\oG_\rA\) 
after taking \(\rA\)-points, and similarly for morphisms.  Thus, we get an 
equivalence (of \(\kk\)-linear tensor categories) between \(\Vec_\LL\) and 
\(\Rep_{\oG_\rA}\).

It remains to show that the functor \(\oX\mt{}V_\oX\) from 
\(\Cc\) to \(\Vec_\LL\) is full and surjective on isomorphism classes, 
i.e., that any definable \(\LL\)-vector space is essentially of the form 
\(V_\oX\), and that any map between is of the form \(v_f\). To prove the 
first, we consider the group \(\oG\) and its torsor \(\oX\). These 
are defined by some \(0\)-definable subspaces of powers of \(V_{\Co{Q}}\) 
(where \(Q\) is a tensor generator.) By examining the explicit definition of 
\(\oG\) and \(\oX\), one concludes that these \(0\)-definable 
subspaces in fact come from \(\Cc\). We omit the proof, since it is very 
similar to the original algebraic one. Once this is know, given any other 
representation \(\oD\), the set \(\oX_\oD\) in Proposition~\ref{mod:action} 
is explicitly given by tensor operations, and so comes from \(\Cc\) as well.  
Then proof for morphisms is again similar.\qed

\begin{cor}
  Quantifier elimination in \(T_\Cc\) holds without parameters: any definable 
  set is quantifier-free without parameters.
\end{cor}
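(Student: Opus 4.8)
First I would strip the parameters off the quantifier elimination of Proposition~\ref{alg:eqei}, using the identification obtained in~\ref{alg:equiv} of the definable \(\LL\)-vector spaces and linear maps of \(T_\Cc\) with the objects and morphisms of \(\Cc\). The reduction step is to Zariski closures. By Proposition~\ref{alg:eqei} (and its proof, which realises every sort as interpretable in \(ACF\)), after naming some constants every definable set is a finite Boolean combination of Zariski-closed subsets of products of the sorts \(V_\oX\); in particular every \(0\)-definable set is constructible. Combining sorts via \(\oplus\) (replacing \(\prod_i V_{\oX_i}\) by \(V_\oX\) for \(\oX=\bigoplus_i\oX_i\in\Cc\), the identification witnessed by function symbols), a Noetherian induction on the poset of Zariski-closed sets reduces the corollary to the following claim: for every \(0\)-definable \(D\subseteq V_\oX\) with \(\oX\) a single object, the Zariski closure \(\overline{D}\) is quantifier-free \(0\)-definable. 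Indeed, granting the claim, \(\overline{D}\setminus D\) is again \(0\)-definable and constructible but has strictly smaller Zariski closure, so it is quantifier-free \(0\)-definable by induction, whence so is \(D=\overline{D}\setminus(\overline{D}\setminus D)\).

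To prove the claim I would fix \(d\) large enough that the ideal \(I(D)\) of polynomials vanishing on \(D\) is generated in degree \(\le d\). Since \(\operatorname{char}\kk=0\), each \(\operatorname{Sym}^e(\Co{\oX})\) is a direct summand of \((\Co{\oX})^{\Ten e}\), so \(Z_d\df\bigoplus_{e\le d}\operatorname{Sym}^e(\Co{\oX})\) is an object of \(\Cc\) and \(V_{Z_d}\) is the definable space of polynomial functions of degree \(\le d\) on \(V_\oX\); the evaluation map \(V_{Z_d}\x V_\oX\to\LL\) is quantifier-free \(0\)-definable, being assembled from iterated \(b\)'s, the symmetrisation morphisms of \(\Cc\), and the duality pairings. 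Let \(I_d\subseteq V_{Z_d}\) be the \(0\)-definable subspace of polynomials vanishing on \(D\); by~\ref{alg:equiv} there is a morphism \(g\colon W\to Z_d\) in \(\Cc\) with \(I_d=\im(v_g)\). Restricting the evaluation pairing along \(v_g\) and currying — using the axioms of \(T_\Cc\) that identify \(V_W\Ten_\LL V_\oX\) with \(V_{W\Ten\oX}\) and \(\Co{V_W}\) with \(V_{\Co{W}}\), and once more~\ref{alg:equiv} to recognise the resulting \(0\)-definable linear map as a function symbol — yields a morphism \(h\colon\oX\to\Co{W}\) of \(\Cc\) such that \(v_h(v)=0\) precisely when every member of \(I_d\) vanishes at \(v\). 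As \(I_d\) generates \(I(D)\), this common zero locus is \(\overline{D}\), so \(\overline{D}=\{v:v_h(v)=0\}\) is quantifier-free and \(0\)-definable.

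The main obstacle is exactly this last construction: converting ``\(\overline{D}\) is the common zero locus of the \(0\)-definable space \(I_d\) of polynomials'' into a single quantifier-free equation over \(\es\). The space \(I_d\) is finite-dimensional but need not have a \(0\)-definable basis, so one cannot simply list finitely many polynomial equations over \(\es\); currying replaces ``\(P(v)=0\) for all \(P\in I_d\)'' by the single condition ``the linear functional \(P\mt P(v)\) vanishes on \(I_d\)'', and it is precisely~\ref{alg:equiv} that forces this functional to depend on \(v\) through a morphism of \(\Cc\), hence through a function symbol of \(T_\Cc\). The remaining verifications — that evaluation and the currying operations are genuinely quantifier-free over \(\es\), that \(\operatorname{char}\kk=0\) keeps \(\operatorname{Sym}^e\) inside \(\Cc\), and the routine bookkeeping for products and for the Noetherian induction — I would only sketch.
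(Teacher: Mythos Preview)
Your overall strategy is sound and is essentially a detailed unpacking of the paper's two-line proof (which just asserts that the needed parameters lie in \(\dcl{\emptyset}\) and, by~\ref{alg:equiv}, are given by terms). The Noetherian reduction to Zariski closures and the identification of the \(0\)-definable subspace \(I_d\) with the image of a morphism \(g\) via~\ref{alg:equiv} are exactly right.

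There is, however, a genuine error in the final currying step. The evaluation pairing \(V_{Z_d}\times V_\oX\to\LL\) is linear in the first variable but \emph{polynomial} of degree \(\le d\) in \(v\), not bilinear; so it does not factor through \(V_W\Ten_\LL V_\oX\), and you do not obtain a morphism \(h\colon\oX\to\Co{W}\) in \(\Cc\). Indeed, if such an \(h\) existed, \(\{v:v_h(v)=0\}=\ker v_h\) would be a linear subspace of \(V_\oX\), whereas \(\overline{D}\) is an arbitrary Zariski-closed set (take e.g.\ \(\sch{G}=GL_2\), \(\oX=\operatorname{Sym}^2 V\) with \(V\) the standard representation, and \(\overline{D}\) the cone of squares: it is cut out by the discriminant, which spans a one-dimensional but nontrivial subrepresentation of \(\operatorname{Sym}^2\Co{\oX}\)).

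The fix is local. Factor the evaluation through the Veronese-type map \(\Phi\colon V_\oX\to V_{\Co{Z_d}}\), \(v\mapsto(1,v,v^{(2)},\dots,v^{(d)})\), where \(v^{(e)}\) is the image of the iterated \(b\)-product \(v\Ten\cdots\Ten v\) under the symmetrisation morphism of \(\Cc\); this \(\Phi\) is a genuine quantifier-free term over \(\emptyset\), though not linear. The \emph{duality} pairing \(V_{Z_d}\times V_{\Co{Z_d}}\to\LL\) is bilinear, so currying after restriction along \(v_g\) now legitimately yields a morphism \(h=\Co{g}\colon\Co{Z_d}\to\Co{W}\) in \(\Cc\), and \(\overline{D}=\{v:v_h(\Phi(v))=0\}\). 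This is quantifier-free and \(0\)-definable, and your induction goes through unchanged.
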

\begin{proof}
  This holds anyway with parameters from the definable closure of the empty 
  set. We now know that any such parameter is given by a term in the 
  language.
\end{proof}

\begin{remark}\label{alg:deligne1}
  In~\Cite{groupoids}, an equivalence is constructed between internal covers 
  of a complete theory \(\cat{T}\), and connected definable groupoids in 
  \(\cat{T}\). It follows from Theorem~2.8 there that if \(T_\Cc\) is 
  consistent and the induced structure on \(\LL\) is precisely \(ACF_\kk\), 
  then the associated groupoid is connected. It also follows that \(T_\Cc\) 
  itself is complete.  It then follows from lemma~2.4 in the same paper that 
  any two models of \(T_\Cc\) are isomorphic. Given two fibre functors of 
  \(\Cc\) (over some extension field), extending both to a model of \(T_\Cc\) 
  thus shows that the fibre functors are locally isomorphic.  Conversely, if 
  \(T_\Cc\) induces new structure on \(\LL\), it is easy to construct fibre 
  functors which are not locally isomorphic (cf.~\Cite[3.15]{LNM900II}).

  The main result of~\Cite{Deligne} can be viewed as stating that the induced 
  structure on \(\LL\) is indeed \(ACF_\kk\). More precisely, given a fibre 
  functor \(\w\) over an (affine) scheme \(\sch{S}\) over \(\kk\), Deligne 
  constructs a groupoid scheme \(\Aut_\kk^\Ten(\w)\), with object scheme 
  \(\sch{S}\).  The main result of~\Cite{Deligne} (Theorem~1.12) states that 
  this groupoid scheme is faithfully flat over \(\sch{S}\x\sch{S}\) (hence 
  connected). A \(\dcs{K}\)-valued point of \(\sch{S}\) (for some field 
  \(\dcs{K}\)) determines a fibre functor \(\w_\dcs{K}\) over \(\dcs{K}\).  
  Viewing this fibre functor as subset of a model, a choice of a basis for a 
  tensor generator determines a type over \(\LL\), and therefore an object of 
  the groupoid corresponding to \(T_\Cc\) (choosing a different basis amounts 
  to choosing a different object which is isomorphic over \(\dcs{K}\) in the 
  groupoid). It is clear from definition of \(\Aut_\kk^\Ten(\w)\) that this 
  process gives an equivalence from this groupoid to the groupoid associated 
  with the internal cover \(T_\Cc\). Thus, Deligne's theorem implies that 
  this groupoid is connected, and therefore that the induced structure on 
  \(\LL\) is that of \(ACF_\kk\). It seems plausible that there should be a 
  direct model theoretic proof of this result, but I could not find it. We 
  note that the non-existence of new structure also directly implies (by the 
  existence of prime models) \Cite[corollary~6.20]{Deligne}, which states 
  that \(\Cc\) has a fibre functor over the algebraic closure of \(\kk\). 

  We note also that any definable groupoid in \(ACF_\kk\) is equivalent to a 
  groupoid scheme (any such groupoid is equivalent to one with a finite set 
  of objects, and the automorphism group of each object is algebraic). The 
  category \(\Cc\) of representations of the (gerb associated with the) 
  groupoid is a tensor category satisfying \(End(\1)=\kk\) 
  (\Cite[section~3]{LNM900II}), and it is easy to see (using the same methods 
  as in~\ref{alg:mainrec}) that the groupoid associated with the internal 
  cover \(T_\Cc\) is equivalent to the original one. Therefore, we obtain 
  that any internal cover of \(ACF_\kk\) has (up to equivalence) the form 
  \(T_\Cc\) for some Tannakian category over \(\kk\).
\end{remark}

\begin{remark}\label{alg:deligne2}
  We interpret model theoretically two additional result of~\Cite{Deligne}.  
  In section~7, it is shown that in our context (i.e., \(char(\kk)=0\) and 
  \(\rk(\oX)\) is a natural number), \(\Cc\) always has a fibre functor. In 
  light of Proposition~\ref{alg:fibismod}, this result asserts that \(T_\Cc\) 
  is consistent for any such \(\Cc\).

  In section~8, Deligne defines, for a Tannakian category \(\Cc\) (and even 
  somewhat more generally), a \(\Cc\)-group \(\pi(\Cc)\), called the 
  \emph{fundamental group} of \(\Cc\) (a \(\Cc\)-group is a commutative Hopf 
  algebra object in \(\Ind{\Cc}\); similarly for affine \(\Cc\)-schemes).  
  The basic idea is that the concepts associated with  neutral Tannakian 
  categories, also make sense for ``fibre functors'' into other tensor 
  categories, and \(\pi(\Cc)\) is obtain by reconstructing a group from the 
  identity functor.

  Thus, \(\pi(\Cc)\) comes with an action on each object, commuting with all 
  morphisms.  It thus represents the tensor automorphisms of the identity 
  functor from \(\Cc\) to itself, in the sense that for any affine 
  \(\Cc\)-scheme \(\oX\), the action identifies \(\pi(\Cc)(\oX)\) with the 
  group of tensor automorphisms of the functor \(\oA\mt\oA\Ten\oX\) (from 
  \(\Cc\) to the tensor category of vector bundles over \(\oX\)). It has the 
  property that for any ``real'' fibre functor \(\w\) over a scheme 
  \(\sch{S}\), applying \(\w\) to the action identifies \(\w(\pi(\Cc))\) with 
  the group scheme \(\Aut_\sch{S}^\Ten(\w)\) constructed.

  Using either the explicit construction, or the properties above, it is 
  clear that in terms of the theory \(T_\Cc\), \(\pi(\Cc)\) is nothing but 
  the definable automorphism group (more precisely, the Hopf algebra object 
  that defines it maps to an ind-definable Hopf algebra in \(T_\Cc\), which 
  is definably isomorphic to the ind-definable Hopf algebra of functions on 
  the internality group).
\end{remark}

\begin{remark}\label{alg:galois}
  One can recover in a similar manner (and somewhat more easily) 
  Grothendieck's approach to usual Galois theory 
  (cf.~\Cite[Expos\'e~V.4]{SGA1}). Briefly, given a category \(\Cc\) with a 
  ``fibre functor'' \(\ftr{F}\) into the category of finite sets, satisfying 
  conditions (G1)--(G6), one constructs as above a theory \(T_\Cc\) with a 
  sort \(V_\oX\) for each object, and a function symbol for each morphism.  
  \(T_\Cc\) is then the theory of \(\ftr{F}\) viewed as a structure in this 
  language. Since every sort is finite, they are all internal to \(\2\) (the 
  co-product of the terminal object \(\1\) with itself). Conditions 
  (G1)--(G5) ensure that any definable set in fact comes from \(\Cc\), and 
  (G2), (G5) ensure that \(T_\Cc\) has elimination of imaginaries. The Galois 
  objects \(P_i\) that appear in the proof are precisely the \(1\)-types of 
  internality parameters that appear in the model theoretic construction of 
  Galois group.
\end{remark}

\section{Differential Tannakian categories}\label{dif}
Our purpose in this section is to define differential tensor categories, and 
to give a model theoretic proof of the basic theorem, corresponding such 
categories, endowed with a suitably defined fibre functor, with linear 
differential algebraic groups. The method is completely analogous to that in 
the previous section.

Throughout this section, \(\kk\) is a field of characteristic \(0\).

\point[Prolongations of abelian categories]
We assume that in a tensor category \((\Cc,\Ten)\), the functor \(\Ten\) 
is exact; this is automatic if \(\Cc\) is rigid 
(see~\Cite[prop.~1.16]{LNM900II}.)

\begin{defns}
Let \(\Cc\) be a \(\kk\)-linear category. The \Def{prolongation} 
\(\DD(\Cc)\) of \(\Cc\) is defined as follows: The objects are exact 
sequences \(\oX:=\Ob{\oX}\) of \(\Cc\), and the morphisms between 
such objects are morphisms of exact sequences whose two \(\oX_0\) parts 
coincide.
\end{defns}

An exact functor \(F:\Cc_1\ra\Cc_2\) gives rise to an induced functor 
\(\DD(F):\DD(\Cc_1)\ra\DD(\Cc_2)\).  We denote by \(\Pi_i\) 
(\(i=0,1\)) the functors from \(\DD(\Cc)\) to \(\Cc\) assigning 
\(\oX_i\) to the object \(\Ob{\oX}\) of \(\DD(\Cc)\) (thus there 
is an exact sequence \(\Ob{\Pi}\).) \(\Pi_i(\oX)\) is also abbreviated as 
\(\oX_i\), and \(\oX\) is said to be over \(\oX_0\) (and 
similarly for morphisms.)

\begin{remarks}
We note that \(\DD(\Cc)\) can be viewed as the full subcategory of the 
category of ``differential objects'' in \(\Cc\), consisting of objects 
whose homology is \(0\). A differential object is a pair \((\oX,\phi)\) 
where \(\oX\) is an object of \(\Cc\) and \(\phi\) is an endomorphism 
of \(\oX\) with \(\phi^2=0\). A morphism is a morphism in \(\Cc\) 
that commutes with \(\phi\), and the homology is \(\ker(\phi)/\im(\phi)\). 
This is the same as the category of \(\kk[\eps]\)-modules in \(\Cc\), in 
the sense of~\Cite[p.~155]{LNM900II} (where \(\eps^2=0\)). The advantage of 
this category is that it is again \(\kk\)-linear. However, I don't know how 
to extend the tensor structure (defined below) to this whole category (in 
particular, the tensor structure defined there does not seem to coincide with 
ours).
\end{remarks}

\subpoint
Let \(\oA\) and \(\oB\) be two objects over \(\oX_0\). Their 
\Def{Yoneda sum} \(\oA\YS\oB\) is a new object over \(\oX_0\), 
defined as follows (this is the addition in Yoneda's description of 
\(Ext^1(\oX_0,\oX_0)\)): the combined map 
\(\oX_0\x\oX_0\ra\oA_1\x\oB_1\) factors through 
\(\oA_1\x_{\oX_0}\oB_1\), and together with the map 
\(\oX_0\ra[1,-1]\oX_0\x\oX_0\) gives rise to a 
map\(f:\oX_0\ra\oA_1\x_{\oX_0}\oB_1\).  Let \(\oW_1\) be the co-kernel of 
this map. The map \(f\) composed with the projection from 
\(\oA_1\x_{\oX_0}\oB_1\) to \(\oX_0\) is \(0\), so we obtain an induced map 
\(p:\oW_1\ra\oX_1\).  The diagonal inclusion \(\Delta\) of \(\oX_0\) in 
\(\oW_1\) together with \(p\) give rise to an exact sequence 
\(0\ra\oX_0\ra[\Delta]\oW_1\ra[p]\oX_0\ra{}0\), which is the required 
object.

For any object \(\oA\) of \(\DD(\Cc)\), we denote by \(T(\oA)\) 
the object obtained by negating all arrows that appear in \(\oA\).

\subpoint[Tensor structure]
Let \((\Cc,\Ten,\phi_0,\psi_0)\) be a tensor category. An object 
\(\oX_0\) of \(\Cc\) gives rise to a functor from \(\DD(\Cc)\) to 
itself, by tensoring the exact sequence pointwise. Since we assumed \(\Ten\) 
to be exact, this functor also commutes with Yoneda sums: 
\((\oA\YS\oB)\Ten\oX_0\) is canonically isomorphic with 
\((\oA\Ten\oX_0)\YS(\oB\Ten\oX_0)\). Also, 
\(T(\oA)\Ten\oX_0\) is isomorphic to \(T(\oA\Ten\oX_0)\).

We endow \(\DD(\Cc)\) with a monoidal structure. The tensor product 
\(\oA\Ten\oB\) of the two \(\DD(\Cc)\) objects \(\oA\) and 
\(\oB\) is defined as follows: After tensoring the first with 
\(\oB_0\) and the second with \(\oA_0\), we obtain two objects over 
\(\oA_0\Ten\oB_0\). We now take their Yoneda sum.

We shall make use of the following exact sequence.

\begin{lemmas}\label{dif:exact}
  For any two objects \(\oA\) and \(\oB\) of \(\DD(\Cc)\), there is an exact 
  sequence
  \begin{equation}\label{eq:tenexact}
    0\ra{(\oA\Ten T(\oB))}_1\ra[i]
    \oA_1\Ten \oB_1\ra[\pi]{(\oA\Ten\oB)}_1\ra 0
  \end{equation}
  where \(\pi\) is the quotient of the map obtained from the maps 
  \(\pi_\oA\Ten{}1\) and \(1\Ten\pi_\oB\), and \(i\) is the restriction of 
  the map obtained from the maps \(i_\oA\Ten{}1\) and \(-1\Ten{}i_\oB\).
\end{lemmas}
\begin{proof}
  Exactness in the middle follows directly from the definitions. We prove 
  that \(\pi\) is surjective, the injectivity of \(i\) being similar. We 
  shall use the Mitchell embedding theorem (cf.~\Cite{freyd}), which reduces 
  the question to the case of abelian groups.

  We in fact prove that already the map
  \begin{equation*}
    \oA_1\Ten\oB_1\ra[\pi]\oA_0\Ten \oB_1\x_{\oA_0\Ten \oB_0}\oA_1\Ten 
    \oB_0=:U
  \end{equation*}
  is surjective. Let \(y\) be an element of \(U\), and let \(y_1\) and 
  \(y_2\) be its two projections to the components of \(U\). Since the map 
  \(\oA_1\Ten\oB_1\ra[\pi_\oA\Ten{}1]\oA_0\Ten\oB_1\) is 
  surjective, \(y_1\) can be lifted to an element \(\ti{y_1}\) of 
  \(\oA_1\Ten\oB_1\). We have that
  \begin{equation*}
    (\pi_\oA\Ten 1)((1\Ten\pi_\oB)(\ti{y_1}))=
    (1\Ten\pi_\oB)((\pi_\oA\Ten 1)(\ti{y_1}))=(1\Ten\pi_\oB)(y_1)=
    (\pi_\oA\Ten 1)(y_2)
  \end{equation*}
  Let \(z=(1\Ten\pi_\oB)(\ti{y_1})-y_2\). Since \(z\) is killed by 
  \(\pi_\oA\Ten{}1\), it comes from an element, also \(z\), of 
  \(\oA_0\Ten\oB_0\). Let \(\ti{z}\) be a lifting of \(z\) to 
  \(\oA_0\Ten\oB_1\), and denote by \(\ti{z}\) also its image in 
  \(\oA_1\Ten\oB_1\) under the inclusion \(i_\oA\Ten{}1\). Then 
  \(\ti{y_1}-\ti{z}\) is a lifting of \(y\).
\end{proof}

\subpoint
Let \(\oA,\oB,\oC\) be three objects of \(\DD(\Cc)\). The 
associativity constraint \(\phi_0\) of \(\Cc\) gives rise to an 
isomorphism of \((\oA\Ten\oB)\Ten\oC\) with the quotient of
\begin{equation*}
  \oA_1\Ten\oB_0\Ten\oC_0\x_{\oA_0\Ten\oB_0\Ten\oC_0}
  \oA_0\Ten\oB_1\Ten\oC_0\x_{\oA_0\Ten\oB_0\Ten\oC_0}
  \oA_0\Ten\oB_0\Ten\oC_1
\end{equation*}
that identifies the three natural inclusions of 
\(\oA_0\Ten\oB_0\Ten\oC_0\), and similarly for 
\(\oA\Ten(\oB\Ten\oC)\). We thus get an associativity constraint 
\(\phi\) on \(\DD(\Cc)\), over \(\phi_0\).

Likewise, the commutativity constraint \(\psi_0\) induces a commutativity 
constraint \(\psi\) on \(\DD(\Cc)\) over \(\psi_0\).

\begin{props}
  The data \((\DD(\Cc),\Ten,\phi,\psi)\) as defined above forms a symmetric 
  monoidal category, and \(\Pi_0\) is a monoidal functor. It is rigid if 
  \(\Cc\) is rigid.
\end{props}
\begin{proof}
  We define the additional data. Verification of the axioms reduces, as in 
  Lemma~\ref{dif:exact}, to the case of abelian groups, where it is easy.

  Let \(u:\1_0\ra\1_0\Ten\1_0\) be an identity object of \(\Cc\). We set  
  \(\1=0\ra\1_0\ra\1_0\oplus\1_0\ra\1_0\ra{}0\). For any object \(\oA\) 
  of \(\DD(\Cc)\), \(\1\Ten\oA\) is identified via \(u\) with 
  \begin{equation*}
    0\ra\oA_0\ra 
    (\oA_1\x_{\oA_0}(\oA_0\oplus\oA_0))/\oA_0\ra 
    \oA_0\ra 0
  \end{equation*}
  This object is canonically isomorphic (over \(\Cc\)) to \(\oA\), 
  and so \(\1\) acquires a structure of an identity object.

  Assume that \(\Cc\) is rigid. For an object \(\oA\) of 
  \(\DD(\Cc)\), we set \(\Co{\oA}\) to be the dual exact sequence 
  \(\Ob{\Co{\oA}}\). We define an evaluation map 
  \(\oA\Ten\Co{\oA}\ra\1\) as follows: We need to define two maps 
  from \(\oA_0\Ten\Co{\oA_1}\x\oA_1\Ten\Co{\oA_0}\) to 
  \(\1_0\), that agree on the two inclusions of 
  \(\oA_0\Ten\Co{\oA_0}\), and such that the resulting map restricts 
  to the evaluation on \(\oA_0\Ten\Co{\oA_0}\).

  To construct the first map, we consider the exact 
  sequence~\eqref{eq:tenexact}, for \(\oB=\Co{\oA}\). We claim that 
  the evaluation map on \(\oA_1\Ten\Co{\oA_1}\) restricts to \(0\) 
  when composed with \(i\). To prove this, it is enough to show that the pair 
  of maps obtained from \(ev_{\oA_1}\) by composition with 
  \(i_\oA\Ten{}1\) and \(-1\Ten{}i_{\Co{\oA}}\) comes from a map 
  \(\oA_0\Ten\Co{\oA_0}\ra\1_0\).  However, under the adjunction, 
  this pair of maps corresponds to \((i_\oA,-\pi_{\oA})\), and so 
  comes from the identity map on \(\oA_0\). It follows that 
  \(ev_{\oA_1}\) induces a map on \({(\oA\Ten\Co{\oA})}_1\), 
  which is the required map. The second map is obtained by projecting to 
  \(\oA_0\Ten\Co{\oA_0}\), and using the evaluation map on 
  \(\oA_0\). By definition, this second map commutes with the projections 
  to \(\oA_0\Ten\Co{\oA_0}\) and the second coordinate of \(\1\), 
  restricting to the evaluation on \(\oA_0\). To prove that the first map 
  restricts to the evaluation as well, we note that there is a commutative 
  diagram
  \begin{equation*}
    \xymatrix{
    {(\oA\Ten\Co{\oA})}_1\ar[r]^i\ar[d]^{\pi_{\oA\Ten\Co{\oA}}}& 
    \oA_1\Ten\Co{\oA_1}\ar[d]^\pi\\
    \oA_0\Ten\Co{\oA_0}\ar[r]_{i_{\oA\Ten\Co{\oA}}} & {(\oA\Ten\Co{\oA})}_1
    }
  \end{equation*}
  where \(i\) is the (restriction of the) map obtained from the two maps 
  \(i_\oA\Ten{}1\) and \(1\Ten{}i_{\Co{\oA}}\).

  Since \(\pi_{\oA\Ten\Co{\oA}}\) is surjective, it is therefore 
  enough to prove that the maps \(ev_{\oA_1}\circ{}i\) and 
  \(ev_{\oA_0}\circ\pi_\oA\) coincide.  This is indeed the case, 
  since they both correspond to the inclusion of \(\oA_0\) in 
  \(\oA_1\).
\end{proof}

\point[Differential tensor categories]
\begin{defns}
  A \Def{differential structure} on a tensor category \(\Cc\) is a tensor 
  functor \(\ftr{D}\) from \(\Cc\) to \(\DD(\Cc)\) which is a section of 
  \(\Pi_0\). If \(\ftr{D}_1\) and \(\ftr{D}_2\) are two differential 
  structures on \(\Cc\), a morphism from \(\ftr{D}_1\) to \(\ftr{D}_2\) is a 
  morphism of tensor functors that induces the identity morphism under 
  \(\Pi_0\). A \Def{differential tensor category} is a tensor category 
  together with a differential structure.
\end{defns}

Let \(\ftr{D}\) be a differential structure on \(\Cc\). Since \(\ftr{D}\) 
is a section of \(\Pi_0\), it is determined by \(\d=\Pi_1\circ{}D\). In other 
words, on the abelian level, it is given by a functor 
\(\d:\Cc\ra\Cc\), together with an exact sequence 
\(0\ra\Id\ra\d\ra\Id\ra{}0\). However, this description does not include the 
tensor structure. We also note that \(\d\) is necessarily exact.

\subpoint\label{dif:dif2der}
Let \((\Cc,\ftr{D})\) be a differential tensor category, let 
\(\d=\Pi_1\circ\ftr{D}\), and let \(A=End(\1)\). Recall that for any object 
\(\oX\), \(End(\oX)\) is an \(A\)-algebra.  The functor \(\d\) 
defines another ring homomorphism \(\d_{\1}:A\ra{}End(\d(\1))\).  Given 
\(a\in{}A\), the morphism \(\d_{\1}(a)-a\) in \(End(\d(\1))\) restricts to 
\(0\) on \(\1\), and thus induces a morphism from \(\1\) to \(\d(\1)\).  
Similarly, its composition with the projection \(\d(\1)\ra\1\) is \(0\), so 
it factors through \(\1\). We thus get a new element \(a'\) of \(A\).
\begin{claims}
  The map \(a\mt{}a'\) of~\ref{dif:dif2der} is a derivation on \(A\).
\end{claims}
\begin{proof}
  We need to show that given elements \(a,b\in{}A\), the maps \(\d(ab)-ab\) 
  and \((\d(a)-a)b+a(\d(b)-b)\) coincide on \(\1\). This follows from the 
  formula \(\d(ab)-ab=\d(a)(\d(b)-b)+(\d(a)-a)b\), together with the fact 
  that \(\d(a)(\d(b)-b)\) induces \(a(\d(b)-b)\) on \(\1\).
\end{proof}

\begin{examples}\label{dif:der2dif}
  Let \(\Cc\) be the tensor category \(\Vec_\kk\) of finite dimensional 
  vector spaces over a field \(\kk\). Given a derivative \('\) on \(\kk\), we 
  construct a differential structure on \(\Cc\) as follows: For a vector 
  space \(\oX\), define \(d(\oX)=\dd\ten\oX\), where \(\dd\) is the vector 
  space with basis \(1,\d\), and \(\ten\) is the tensor product with respect 
  to the \emph{right} vector space structure on \(\dd\), given by 
  \(1\cdot{}a=a\cdot{}1\) and \(\d\cdot{}a=a'\cdot{}1+a\cdot\d\). The exact 
  sequence \(\ftr{D}(\oX)\) is defined by \(x\mt{}1\ten{}x\), 
  \(1\ten{}x\mt{}0\) and \(\d\ten{}x\mt{}x\), for any \(x\in\oX\).  If 
  \(T:\oX\ra\oY\) is a linear map, \(d(T)=1\ten{}T\). We shall write \(x\) 
  for \(1\ten{}x\) and \(\d{}x\) for \(\d\ten{}x\). The structure of a tensor 
  functor is obtained by sending \(\d(x\Ten{}y)\in{}d(\oX\Ten\oY)\) to the 
  image of \(\d(x)\Ten{}y\oplus{}x\Ten\d(y)\) in 
  \((\ftr{D}(\oX)\Ten\ftr{D}(\oY))_1\).
\end{examples}

\begin{claims}
  The constructions in~\ref{dif:der2dif} and in~\ref{dif:dif2der} give a 
  bijective correspondence between derivatives on \(\kk\) and isomorphism 
  classes of differential structures on \(\Vec_\kk\).
\end{claims}
\begin{proof}
  If \(\ftr{D}_1\) and \(\ftr{D}_2\) are two differential structures, then 
  \(\ftr{D}_1(\1_0)\) and \(\ftr{D}_2(\1_0)\) are both identity objects, and 
  are therefore canonically isomorphic to the same object \(\1\). If 
  \(\ftr{D}_1\) and \(\ftr{D}_2\) are isomorphic, then the maps 
  \(d_i:End(\1_0)\ra{}End(\1)\) are conjugate, and therefore equal, since 
  \(End(\1)\) is commutative.

  It is clear from the definition that the derivative on \(\kk\) obtained 
  from the differential structure associated with a derivative is the 
  original one. Conversely, if \(\ftr{D}_1\) and \(\ftr{D}_2\) are two 
  differential structures that give the same derivative on \(\kk\), then we 
  may identify \(\ftr{D}_1(\1_0)\) and \(\ftr{D}_2(\1_0)\). Under this 
  identification, we get that the maps \(d_i\) are the same. But the functors 
  \(\ftr{D}_i\) are determined by \(d_i\).
\end{proof}

\subpoint
We now come to the definition of functors between differential tensor 
categories. For simplicity, we shall only define (and use) \emph{exact} such 
functors.  

Let \(\w:\Cc\ra\cat{D}\) be an exact functor between abelian categories.  
There is an induced functor \(\DD(\w):\DD(\Cc)\ra\DD(\cat{D})\), given by 
applying \(\w\) to each term. If \(\Cc\) and \(\cat{D}\) are tensor 
categories, the structure of a tensor functor on \(\w\) gives rise to a 
similar structure on \(\DD(\w)\) (again, since \(\w\) is exact.)  If 
\(t:\w_1\ra\w_2\) is a (tensor) morphism of functors, we likewise get an 
induced morphism \(\DD(t):\DD(\w_1)\ra\DD(\w_2)\).

\begin{defns}
Let \((\Cc_1,\ftr{D}_1)\) and \((\Cc_2,\ftr{D}_2)\) be two differential 
tensor categories.
\begin{enumerate}
  \item
    A \Def{differential tensor functor} from \(\Cc_1\) to \(\Cc_2\) is an 
    exact tensor functor \(\w\) from \(\Cc_1\) to \(\Cc_2\), together with an 
    isomorphism of tensor functors 
    \(r:\DD(\w)\circ\ftr{D}_1\ra\ftr{D}_2\circ\w\), such that the 
    ``restriction'' \(\Pi_0\odot{}r:\w\ra\w\), obtained by composing with 
    \(\Pi_0\) on \(\DD(\Cc_2)\), is the identity.

  \item
    A morphism between two such differential tensor functors \((\w_1,r_1)\) 
    and \((\w_2,r_2)\) is a morphism \(t\) between them as tensor functors 
    such that the following diagram (of tensor functors and tensor maps 
    between them) commutes:
    \begin{equation}\label{eq:diffmorphism}
      \xymatrix{
      \DD(\w_1)\circ \ftr{D}_1\ar[r]^{r_1}\ar[d]^{\DD(t)\odot \ftr{D}_1} &
      \ftr{D}_2\circ\w_1 \ar[d]^{\ftr{D}_2\odot t} \\
      \DD(\w_2)\circ \ftr{D}_1\ar[r]^{r_2} & \ftr{D}_2\circ\w_2
      }
    \end{equation}
    where \(\ftr{D}_2\odot{}t\) is the map from \(\ftr{D}_2\circ\w_1\) to 
    \(\ftr{D}_2\circ\w_2\) obtained by applying \(\ftr{D}_2\) to \(t\) 
    ``pointwise''.
\end{enumerate}
\end{defns}

\subpoint\label{dif:autw}
Given a differential tensor functor \(\w\), we denote by \(Aut^{\d}(\w)\) the 
group of automorphisms of \(\w\).

If \(\Cc\) is a differential tensor category, and \(\kk=End(\1)\) is a 
field, a \(\kk\)-linear differential tensor functor \(\w\) into \(\Vec_\kk\) 
(with the induced differential structure) is called a \Def{differential fibre 
functor}. Analogously to the algebraic case, we will say that \(\Cc\) is a 
\Def{neutral differential Tannakian category} if such a fibre functor exists, 
and that \(\w\) neutralises \(\Cc\). Since we will not define more general 
differential Tannakian categories, the adjective ``neutral'' will be dropped.

As in the algebraic case, given a differential fibre functor \(\w\) on 
\(\Cc\), we denote by \(\Aut^\d(\w)\) the functor from differential 
\(\kk\)-algebras to groups assigning to a differential algebra \(A\) the 
group \(Aut^{\d}(A\Ten\w)\).

\subpoint\label{dif:universal}
Given a \(\kk\) vector space \(V\), the map \(d:V\ra\dd\ten{}V\) given by 
\(v\mt\d{}v\) is a derivation, in the sense that \(d(av)=a'v+ad(v)\) (where 
\(V\) is identified with its image in \(\dd\ten{}V\).) It is universal for 
this property: any pair \((i,d):V\ra{}W\), where \(i\) is linear, and \(d\) 
is a derivation with respect to \(i\) factors through it.

Therefore, a fibre functor on \((\Cc,\ftr{D})\) is a fibre functor \(\w\) 
in the sense of tensor categories, together with a functorial derivation 
\(d_{\oX_0}:\w(\oX_0)\ra\w(\d\oX_0)\) (where 
\(\d\oX_0=\ftr{D}(\oX_0)_1\)), satisfying the Leibniz rule with 
respect to the tensor product (and additional conditions). The condition that 
the restriction to \(\w\) is the identity corresponds to the derivation being 
relative to the canonical inclusion of \(\w(\oX_0)\) in 
\(\w(\d\oX_0)\) given by the differential structure.

Similarly, a differential automorphism of \(\w\) is an automorphism \(t\) of 
\(\w\) as a tensor functor, with the additional condition that for any object 
\(\oX_0\), the diagram
\begin{equation}\label{eq:fibauto}
  \xymatrix{
  \w(\oX_0)\ar[r]^{d_{\oX_0}}\ar[d]^{t_{\oX_0}} & 
  \w(\d\oX_0)\ar[d]^{t_{\d\oX_0}} \\
  \w(\oX_0)\ar[r]^{d_{\oX_0}}             & \w(\d\oX_0)
  }
\end{equation}
commutes. Thus the condition~\eqref{eq:diffmorphism} really is about 
preservation of the differentiation.

\subpoint[Derivations]\label{dif:derivation}
More generally, we define a \Def{derivation} on an object \(\oX_0\) of 
\(\Cc\) to be a morphism \(d:\oX_1:=\d(\oX_0)\ra\oX_0\) such that 
the composition \(\oX_0\ra\oX_1\ra[d]\oX_0\) is the identity.

Given two derivations \(d_X:\oX_1\ra\oX_0\) and 
\(d_Y:\oY_1\ra\oY_0\), we define  the combined derivation 
\(d=d_X\Ten{}d_Y:(\oX\Ten\oY)_1=(\oX_0\Ten\oY_0)_1\ra\oX_0\Ten\oY_0\)
to be the restriction of \(d_X\Ten\Id\oplus\Id\Ten{}d_Y\) to 
\((\oX\Ten\oY)_1\) (this makes sense, since both are the identity on 
\(\oX_0\Ten\oY_0\)). In \(\Vec_\kk\) this corresponds to a real 
derivation, in the sense that it gives a map 
\(d_0:\oX_0\Ten\oY_0\ra\oX_0\Ten\oY_0\) which is a 
derivation over \(\kk\) and also 
\(d_0(x\Ten{}y)=d_X(x)\Ten{}y+x\Ten{}d_Y(y)\).

There is a canonical derivation \(d_1\) on \(\1_0\) given by the first 
projection \(\1_1=\1_0\oplus\1_0\ra\1_0\). It has the property that 
\(d_1\Ten{}d=d\Ten{}d_1\) for any derivation \(d\) on any \(\oX_0\),  
under the canonical identification of \(\1\Ten\oX_0\) and 
\(\oX_0\Ten\1\) with \(\oX_0\).

\point[Differential algebraic groups]
In this sub-section, we recall and summarise some definitions and basic facts  
from differential algebraic geometry (developed by~\Cite{Kolchin}) and linear 
differential algebraic groups (studied by~\Cite{cassidy}). We show that the 
category of differential representations of such a group is a differential 
tensor category in our sense.

\subpoint
Let \(\dcs{K}\) be a differential field (i.e., a field endowed with a 
derivation).  We recall (\Cite{Kolchin}) that a \Def{Kolchin closed} subset 
(of a affine \(n\)-space) is given by a collection of polynomial (ordinary) 
differential equations in variables \(x_1,\dots,x_n\), i.e., polynomial 
equations in variables \(\delta^i(x_j)\), for \(i\ge{}0\). Such a collection 
determines a set of points (solutions) in any differential field extension of 
\(\dcs{K}\).  As with algebraic varieties, it is possible to study these sets 
by considering points in a fixed field, provided that it is 
\Def{differentially closed}. The Kolchin closed sets form a basis of closed 
sets for a Noetherian topology.  Morphisms are also given by differential 
polynomials. A differential algebraic group is a group object in this 
category.

More generally, it is possible to consider a differential 
\(\dcs{K}\)-algebra, i.e., a \(\dcs{K}\)-algebra with a vector field 
extending the derivation on \(\dcs{K}\), and develop these notions there.

\subpoint
By a \Def{linear} differential algebraic group, we mean a differential 
algebraic group which is represented by a differential Hopf algebra. A 
differential algebraic group which is affine as a differential algebraic 
variety need not be linear in this sense, since a morphism of affine 
differential varieties need not correspond to a map of differential algebras.  
Any linear differential algebraic group has a faithful representation. All 
these results appear in~\Cite{cassidy}, along with an example of an affine 
non-linear group. In~\Cite{cassidy2} it is shown that any representation of a 
\emph{linear} group (and more generally, any morphism of linear groups) does 
correspond to a map of differential algebras.

\subpoint[Differential representations]\label{dif:rep}
Let \(\sch{G}\) be a linear differential algebraic group over a differential 
field \(\kk\). A representation of \(\sch{G}\) is given by a finite 
dimensional vector space \(V\) over \(\kk\), together with a morphism 
\(\sch{G}\ra{}GL(V)\). A map of representations is a linear transformation 
that gives a map of group representations for each differential 
\(\kk\)-algebra.  The category of all such representations is denoted 
\(\Rep_\sch{G}\).

We endow \(\Rep_\sch{G}\) with a differential structure in the same way as 
for vector spaces. If \(V\) is a representation of \(\sch{G}\), assigning 
\(gv\) to \((g,v)\), then the action of \(\sch{G}\) on \(\dd\ten{}V\) is 
given by \((g,x\ten{}v)\mt{}x\ten{}gv\).  With this differential structure, 
the forgetful functor \(\w\) into \(\Vec_\kk\) has an obvious structure of a 
differential tensor functor.

A differential automorphism \(t\) of \(\w\) is given by a collection of 
vector space automorphisms \(t_V\), for any representation \(V\) of \(G\). 
The commutativity condition~\eqref{eq:fibauto} above translates to the 
condition that \(t_{\dd\ten{}V}=1\ten{}t_V\).

In particular, given a differential \(\kk\)-algebra \(A\), and 
\(g\in\sch{G}(A)\), action by \(g\) gives an automorphism of \(A\Ten\w\) as 
a differential tensor functor, since the action of \(g\) on \(\dd\ten{}V\) is 
deduced from its action on \(V\). Thus we get a map \(\sch{G}\ra\sch{G}_\w\).  
We shall prove in Theorem~\ref{dif:main} that the map is an isomorphism.

\begin{examples}
Let \(\sch{G}_m\) be the (differential) multiplicative group, and let 
\(\wbar{\sch{G}_m}\) be the multiplicative group of the constants (thus, as 
differential varieties, \(\sch{G}_m\) is given by the equation \(xy=1\), and 
\(\wbar{\sch{G}_m}\) is the subvariety given by \(x'=0\).) There is a 
differential algebraic group homomorphism \(\dlog\) from \(\sch{G}_m\) to 
\(\sch{G}_a\) (the additive group), sending \(x\) to \(x'/x\), and 
\(x\mt{}x'\) is a differential algebraic group endomorphism of \(\sch{G}_a\). 
Let \(V\) be the standard \(2\)-dimensional algebraic representation of 
\(\sch{G}_a\).  Using \(\dlog\) and the derivative, we get for any 
\(i\ge{}0\) a \(2\)-dimensional irreducible representation \(V_i\) of 
\(\sch{G}_m\), which are all unrelated in terms of the tensor structure (and 
unrelated with the non-trivial \(1\)-dimensional algebraic representations of 
\(\sch{G}_m\).)

However, if \(\oX\) is the \(\sch{G}_m\) representation corresponding to 
the identity map on \(\sch{G}_m\), an easy calculation shows that \(V_0\) is 
isomorphic to \(\d\oX\Ten\Co{\oX}\). Similarly, \(V_{i+1}\) is a 
quotient of \(\d{}V_i\).

The inclusion of \(\wbar{\sch{G}_m}\) in \(\sch{G}_m\) gives a functor from 
\(\Rep_{\sch{G}_m}\) to \(\Rep_{\wbar{\sch{G}_m}}\). But in 
\(\Rep_{\wbar{\sch{G}_m}}\), \(V_0\) is isomorphic to \(\1\oplus\1\) (and 
\(\d\oX\) to \(\oX\oplus\oX\).)
\end{examples}

\point[Differential schemes in \(\Cc\)]
We define affine differential schemes in a differential tensor category, and 
show that any object can be viewed as a ``differential affine space''. This 
is analogues to the notion of \(\Cc\)-schemes for tensor categories that 
appears in~\Cite{Deligne}. The main application is the proof of elimination 
of imaginaries in Proposition~\ref{dif:ei}.

\subpoint
We recall that the \(\Ten\) operation on \(\Cc\) extends canonically to 
\(\Ind{\Cc}\), making it again an abelian monoidal category. The 
prolongation \(\DD(\Ind{\Cc})\) can be identified with 
\(\Ind{\DD(\Cc)}\), and a differential structure on \(\Cc\) thus 
extends canonically to a differential structure on \(\Ind{\Cc}\).

Recall (\Cite[7.5]{Deligne}) that if \(\Cc\) is a tensor category, a 
\Def{ring} in \(\Ind{\Cc}\) is an object \(\oA\) of \(\Ind{\Cc}\) 
together with maps \(m:\oA\Ten\oA\ra\oA\) and \(u:\1\ra\oA\) 
satisfying the usual axioms.

\begin{defns}
  Let \((\Cc,\ftr{D})\) be a differential tensor category, and let \(\oA\) be 
  a commutative ring in \(\Ind{\Cc}\). A \Def{vector field} on \(\oA\) is a 
  derivation on \(\oA\) in the sense of~\ref{dif:derivation}, which commutes 
  with the product, and which restricts to the canonical derivation \(d_1\) 
  on \(\1\). A \Def{differential algebra} in \(\Ind{\Cc}\) is a commutative 
  ring in \(\Ind{\Cc}\) together with a vector field. An (affine) 
  \Def{differential scheme} in \(\Cc\) is a differential algebra in 
  \(\Ind{\Cc}\), viewed as an object in the opposite category.
\end{defns}

\subpoint[Higher derivations]\label{dif:higher}
Let \(\oX_0\) be an object of a differential tensor category 
\((\Cc,\ftr{D})\). As explained above, \(\ftr{D}(\oX_0)\) can be viewed 
as representing a universal derivation on \(\oX_0\). We now construct the 
analogue of higher derivatives. More precisely, we define, by induction for 
each \(n\ge{}0\), the following data:
\begin{enumerate}
  \item An object \(\oX_n\) (of \(\Cc\))
  \item A map \(q_n:\oX_{n-1}\ra\oX_n\)
  \item A map \(t_n:\d\oX_{n-1}\ra\oX_n\)
\end{enumerate}
such that \(q_{n+1}\circ{}t_n=t_{n+1}\circ\d(q_n)\). In the context of 
\(\Vec_\kk\), the data can be thought of as follows: \(\oX_n\) is the 
space of expressions \(v_0+\d{}v_1+\dots+\d^n{}v_n\) with 
\(v_j\in\oX_0\); \(q_n\) the inclusion of elements as above; the map 
\(t_n\) the linear map corresponding to the derivation.

For the base, setting \(\oX_{-1}=0\) determines all the data in an 
obvious way. Given \(\oX_n\), \(t_{n+1}:\d(\oX_n)\ra\oX_{n+1}\) 
is defined to be the co-equaliser of the following two maps:
\begin{equation}
  \xymatrix{
                  & \oX_n\ar[rd]^{i} & \\
      \d(\oX_{n-1})\ar[ru]^{t_n}\ar[rr]^{\d(q_n)} & & \d(\oX_n)
  }
\end{equation}
Where \(i\) is part of the structure of \(\ftr{D}(\oX_n)\). The map 
\(q_{n+1}\) is the composition \(t_{n+1}\circ{}i\). Clearly the commutativity 
condition is satisfied. We note that the two object we denote by 
\(\oX_1\) coincide, and the map \(q_1\) coincides with the map \(i\) for 
\(\ftr{D}(\oX_0)\). The map \(t_1\) is the identity.

\begin{defns}\label{dif:scheme}
  Let \(\oX_0\) be an object of of \(\Cc\). The \Def{differential scheme 
  associated with \(\oX_0\)}, denoted \(\Sch(\oX_0)\), is a differential 
  scheme in \(\Cc\) defined as follows: Let \(\oD\) be the ind-object defined 
  by the system \(\Co{\oX}_i\), with maps \(q_i\) (as in~\ref{dif:higher}).  
  The maps \(t_i\) there define a derivation \(t\) on \(\oD\). This 
  derivation induces a derivation on tensor powers of \(\oD\) (as 
  in~\ref{dif:derivation}), which descends to the symmetric powers. It is 
  easy to see that this determines a differential algebra structure on the 
  symmetric algebra on \(\oD\). We let \(\Sch(\oX_0)\) be the associated 
  scheme.
\end{defns}
A morphism in \(\Cc\) clearly determines a morphism of schemes on the 
associated schemes, making \(\Sch(-)\) into a functor.

\point[Model theory of differential fibre functors]
We now wish to prove statements analogous to the ones for algebraic Tannakian 
categories, using the same approach as in Section~\ref{alg}. We work with a 
fixed differential tensor category \((\Cc,\ftr{D})\), with \(\kk=End(\1)\) a 
field. We view  \(\kk\) as a differential field, with the differential 
structure induced from \(\ftr{D}\), as in~\ref{dif:dif2der}.

We will be using the theory \(DCF\) of differentially closed fields. We refer 
the reader to~\Cite{DCF} or~\Cite{DCFv} for more information.

\subpoint[The theory associated with a differential tensor 
category]\label{dif:theory}
The theory \(T_\Cc\) associated with the data above, as well as a 
differential field extension \(\dcs{K}\) of \(\kk\) is an expansion of the 
theory \(T_\Cc\) as defined in~\ref{alg:theory} by the following 
structure:
\begin{enumerate}
  \item
    \(\LL\) has an additional function symbol \('\), and the theory says that 
    \('\) is a derivation, and that \(\LL\) is a differentially closed field 
    (and with the restriction of \('\) to \(\dcs{K}\) as given).

  \item\label{dif:der}
    For every object \(\oX\), there is a function symbol 
    \(d_\oX:V_\oX\ra{}V_{\d(\oX)}\).  This function is a 
    derivation, in the sense that for any \(a\in\LL\) and 
    \(v\in{}V_\oX\),
    \begin{equation*}
      d_\oX(av)=a'V_{i_\oX}(v)+ad_\oX(v)
    \end{equation*}
    The theory furthermore says that \(d_\oX\) identifies 
    \(V_{\d(\oX)}\) with \(\dd\ten{}V_\oX\) (in any model), in the 
    sense of~\ref{dif:universal} (explicitly, it says that 
    \(V_{p_\oX}\circ{}d_\oX\) is the identity map.)

  \item\label{dif:comp}
    The maps \(d\) and \(b\) (from the tensor structure) are compatible with 
    the structure of tensor functor of \(\ftr{D}\): given objects \(\oX\) 
    and \(\oY\) of \(\Cc\), let 
    \(c_{\oX,\oY}:\d(\oX\Ten\oY)\ra{}(\d(\oX)\Ten\d(\oY))_1\) 
    be the isomorphism supplied with \(\ftr{D}\). Then we require that 
    \(V_{c_{\oX,\oY}}\circ{}d_{\oX\Ten\oY}\circ{}b_{\oX,\oY}\) 
    coincides with 
    \(b_{\d(\oX),\oY}\circ{}d_\oX\x{}1+b_{\oX,\d(\oY)}\circ{}1\x{}d_\oY\).
\end{enumerate}

\subpoint\label{dif:dcl}
Let \(\w\) be a differential fibre functor on \(\Cc\), and let 
\(L=\rM_\1\) be a differentially closed field containing \(\dcs{K}\). As 
in Section~\ref{alg}, we expand \(\rM_\1\) to a model \(\rM\) of \(T_\Cc\) by 
tensoring with \(L\).  The differential structure of \(\w\) gives (as 
in~\ref{dif:universal}) a universal derivation 
\(\w(\oX)\ra\w(\d(\oX))\), which extends uniquely to a (universal) 
derivation \((d_\oX)_\rM\) on \(\rM_\oX\).

As in the algebraic case, we get:
\begin{props}
  Assume that \(\Cc\) has a differential fibre functor.  Then \(T_\Cc\) is 
  consistent, and (in a model) \(\dcl{0}\cap\LL=\kk\). The theory \(T_\Cc\) 
  is stable, and \(\LL\) is a pure differentially closed field.
\end{props}
\begin{proof}
  Same as in Proposition~\ref{alg:stable} and Proposition~\ref{alg:fibismod}.
\end{proof}

\subpoint[Internality]
Since the differential \(T_\Cc\) is an expansion of the algebraic one 
with no new sorts, it is again an internal cover of \(\LL\).  Furthermore, if 
\(B\) is a basis for some \(V_\oX\), then \(B\cup{}d_\oX(B)\) is a 
basis for \(V_{\d(\oX)}\).  Therefore, if \(\Cc\) is generated as a 
differential tensor category by one object (in the sense that the objects 
\(\d^i\oX\) generate \(\Cc\) as a tensor category), then all of the 
sorts are internal using the same finite parameter. As usual, the general 
case is obtained by taking a limit of such.

\begin{theorems}\label{dif:main}
Let \(\Cc\) be a differential Tannakian category over \(\kk\), and let 
\(\w:\Cc\ra\Vec_\kk\) be a differential fibre functor (\ref{dif:autw}).
\begin{enumerate}
  \item\label{dif:mainexr}
    The functor \(\Aut^\d(\w)\) (restricted to differential fields) is 
    represented by a linear differential group \(\sch{G}\) over \(\kk\).

  \item\label{dif:equiv}
    The fibre functor \(\w\) factors through a differential tensor 
    equivalence \(\w:\Cc\ra\Rep_\sch{G}\).

  \item\label{dif:mainrec}
    If \(\Cc=\Rep_\sch{H}\) for some linear differential group \(\sch{H}\), 
    then the natural map \(\sch{H}\ra\sch{G}\) given in~\ref{dif:rep} is an 
    isomorphism.
\end{enumerate}
\end{theorems}

\begin{proof}
  The proof is completely analogous to the proof of the corresponding 
  statement~\ref{tan:main}, which is given, respectively, 
  in~\ref{alg:mainexr},~\ref{alg:equiv} and~\ref{alg:mainrec}. We only 
  mention the differences.

  For~(\ref{dif:mainexr}), the main point is again that 
  \(\Aut^\d(\w)(\dcs{K})\) is isomorphic to \(\oG_\w(\dcs{K})\) (functorially 
  in \(\dcs{K}\)), where \(\oG_\w\) is a definable copy of the internality 
  group inside \(\LL\). The sort \(\LL\) is now a pure differentially closed 
  field, so the result follows from the fact that any definable group in 
  \(DCF\) is differential algebraic (\Cite{pillayDAG}).

  For~(\ref{dif:equiv}), the argument in~\ref{alg:equiv} goes through 
  without a change.

  For~(\ref{dif:mainrec}), again the proof in~\ref{alg:mainrec} applies, 
  once we classify the imaginaries in \(T_\Cc\). This is the content of 
  Proposition~\ref{dif:ei}.
\end{proof}

\begin{props}\label{dif:ei}
  \(T_\Cc\) eliminates imaginaries to the level of projective spaces.
\end{props}
\begin{proof}
  Both the statement and the proof are analogous 
  to~\Cite[Proposition~4.2]{groupoids}.

  We need to show that any definable set \(S\) over parameters can be defined 
  with a canonical parameter. Since, by assumption, no new structure is 
  induced on \(\LL\), and any set is internal to \(\LL\), every such set is 
  Kolchin constructible. By Noetherian induction, it is enough to consider 
  \(S\) Kolchin closed.

  We claim that the algebra of differential polynomials on a sort \(U\) is 
  ind-definable in \(T_\Cc\). Indeed, it is precisely given by the 
  interpretation of scheme structure associated with \(U\) 
  (Definition~\ref{dif:scheme}). We only mention the definition of the 
  evaluation map (using the notation there): it is enough to the evaluation 
  on (the interpretation of) \(\oD\), since the rest is as in the algebraic 
  case.  We define the evaluation \(e_n:\Co{U}_n\x{}U\ra\LL\) by induction on 
  \(n\): \(e_0\) is the usual evaluation. If \(u\in{}U\), the map 
  \(d\mt{}e_n(d,u)'\) is a derivation on \(\Co{U}_n\), and so defines a 
  linear map from \(\d(\Co{U}_n)\) to \(\LL\). Inspection of the definition 
  of \(\Co{U}_{n+1}\) (for vector spaces) reveals that this map descends to a 
  linear map \(e_{n+1}(-,u)\) on \(\Co{U}_{n+1}\).

  The rest of the proof is the same as in~\Cite{groupoids}, namely, the 
  Kolchin closed set \(S\) is determined by the finite dimensional linear 
  space spanned by the defining equations, and this space is an elements of 
  some Grassmanian, which is, in turn, a closed subset of some projective 
  space.
\end{proof}

\printbibliography[maxnames=10]

\end{document}